\title{Coloring Problems on Bipartite Graphs of Small Diameter} 
\titlerunning{Coloring Problems on Bipartite Graphs of Small Diameter} 
\author{Victor A. Campos}{Departamento de Computa\c c\~ ao, Universidade Federal do Ceará, Fortaleza, Brazil}{campos@lia.ufc.br}{https://orcid.org/0000-0002-2730-4640}{Partially supported by FUNCAP/CNPq/Brazil, Project PRONEM PNE-0112-00061.01.00/16.}
\author{Guilherme C. M. Gomes}{Departamento de Computa\c c\~ ao, Universidade Federal de Minas Gerais, Belo Horizonte, Brazil}{gcm.gomes@dcc.ufmg.br}{https://orcid.org/0000-0002-5164-1460}{Coordenação de Aperfeiçoamento de Pessoal de Nível Superior - Brasil (CAPES) - Finance Code 001.}
\author{Allen Ibiapina}{Departamento de Matemática, Universidade Federal do Ceará, Fortaleza, Brazil}{allen.ibiapina@alu.ufc.br}{[orcid]}{PhD scholarship granted by CAPES.}
\author{Raul Lopes}{Departamento de Computa\c c\~ ao, Universidade Federal do Ceará, Fortaleza, Brazil}{raul.lopes@lia.ufc.br}{https://orcid.org/0000-0002-7487-3475}{PhD scholarship granted by FUNCAP.}
\author{Ignasi Sau}{LIRMM, Université de Montpellier, CNRS, Montpellier, France}{ignasi.sau@lirmm.fr}{https://orcid.org/0000-0002-8981-9287}{CAPES-PRINT Institutional Internationalization Program, process 88887.468331/2019-00, and French projects DEMOGRAPH (ANR-16-CE40-0028), ESIGMA (ANR-17-CE23-0010), and ELIT (ANR-20-CE48-0008-01).}
\author{Ana Silva}{Departamento de Matemática, Universidade Federal do Ceará, Fortaleza, Brazil}{anasilva@mat.ufc.br}{https://orcid.org/0000-0001-8917-0564}{Partially supported by FUNCAP/CNPq/Brazil, Project PRONEM PNE-0112-00061.01.00/16, CNPq Universal 401519/2016-3 and 437841/2018-9, and CNPq Produtividade 304576/2017-4. }
\authorrunning{V. A. Campos, G. C. M. Gomes, A. Ibiapina, R. Lopes, I. Sau, and A. Silva}
\keywords{Coloring, list coloring, homomorphism, bipartite graphs, diameter.} 
\newcommand{\NPc}{{\sf NP\text{-}c}\xspace}
\renewenvironment{proof}[1][]{\par \noindent {\bf Proof:#1}\ }{\hfill$\Box$\medskip}
\newcommand{\problemtitle}[1]{\gdef\@problemtitle{#1}}
\newcommand{\probleminput}[1]{\gdef\@probleminput{#1}}
\newcommand{\problemquestion}[1]{\gdef\@problemquestion{#1}}
  \par\addvspace{.5\baselineskip}
  \par\addvspace{.5\baselineskip}
\newcommand{\kLCol}[1]{\textsc{$#1$-List Coloring}}
\newcommand{\LkCol}[1]{\textsc{List $#1$-Coloring}}
\newcommand{\kPre}[1]{\textsc{$#1$-PreExt}}
\newcommand{\kFall}[1]{\textsc{$#1$-Fall-Coloring}}
\newcommand{\RetF}[1]{\textsc{Retract to $#1$}}
\newcommand{\Comp}{\textsc{Edge-Surjective $C_6$-Homomorphism}}
\newcommand{\HComp}{\textsc{Edge-Surjective $H$-Homomorphism}}
\newcommand{\Hom}{\textsc{Surjective $C_6$-Homomorphism}}
\newcommand{\HHom}{\textsc{Surjective $H$-Homomorphism}}
\newcommand{\yes}{{\sc yes}}
\newcommand{\no}{{\sc no}}
\colorlet{myblue}{blue!80}
\begin{document}

\maketitle
\begin{abstract}We investigate a number of coloring problems restricted to bipartite graphs with bounded diameter.
First, we investigate the $k$-\textsc{List Coloring}, \textsc{List $k$-Coloring}, and $k$-\textsc{Precoloring Extension} problems on bipartite graphs with diameter at most $d$, proving $\NP$-completeness in most cases, and leaving open only the  \textsc{List $3$-Coloring} and $3$-\textsc{Precoloring Extension} problems when $d=3$.

Some of these results are obtained through a proof that the \textsc{Surjective $C_6$-Homomorphism} problem is $\NP$-complete on bipartite graphs with diameter at most~four.
Although the latter result has been already proved~[Vikas, 2017], we present ours as an alternative simpler one.
As a byproduct, we also get that \textsc{$3$-Biclique Partition} is $\NP$-complete. An attempt to  prove this result was presented in~[Fleischner, Mujuni, Paulusma, and Szeider, 2009], but there was a flaw in their proof, which we identify and discuss here.

Finally, we prove that the $3$-\textsc{Fall Coloring} problem is $\NP$-complete on bipartite graphs with diameter at most~four, and prove that $\NP$-completeness for diameter three would also imply $\NP$-completeness of $3$-\textsc{Precoloring Extension} on diameter three, thus closing the previously mentioned open cases. This would also answer a question posed in~[Kratochvíl, Tuza, and Voigt, 2002].
\end{abstract}

\section{Introduction}
\label{sec:intro}

Graph coloring problems are among the most fundamental and studied problems in graph theory, due to their practical and theoretical importance.
A \emph{proper coloring} of a graph $G$ is a function $f : V(G) \to \mathbb{N}$ such that $f(u) \neq f(v)$ for every edge $uv \in E(G)$, and the $k$-\textsc{Coloring} problem asks whether a given graph $G$ admits a proper coloring using at most $k$ colors.
A well-known result by Karp~\cite{Karp72} shows that the $k$-\textsc{Coloring} problem is $\NP$-complete for every fixed $k \geq 3$.
In this paper, we study two of the most general coloring problems: \emph{list coloring} and \emph{graph homomorphism}.

In the $k$-\textsc{List Coloring} problem, we are given a graph $G$ together with a function $L$ which assigns to each $u\in V(G)$ a subset of allowed colors with $|L(u)| \leq k$ for every $u \in V(G)$.
This is called a \emph{list assignment of $G$}.
The question is whether $G$ admits a proper coloring $f$ such that $f(u)\in L(u)$ for every $u\in V(G)$; if the answer is \yes, we say that $G$ is \emph{$L$-colorable}.
If $G$ is $L$-colorable for every list assignment $L$ satisfying $|L(v)| \ge k$  for every $v \in V(G)$, we say that $G$ is \emph{$k$-choosable}.

Observe that this generalizes the $k$-\textsc{Coloring} problem: it suffices to consider $L(u) =\{1,\ldots,k\}$ for every $u \in V(G)$.
Thus $k$-\textsc{List Coloring} is $\NP$-complete for every fixed $k\ge 3$; this was shown by Holyer~\cite{H.81} and Karp~\cite{Karp72}.
Another natural coloring problem that can be modeled as a list coloring problem is the \textsc{$k$-Precoloring Extension} problem, where some vertices have fixed colors and the goal is to extend this precoloring to a proper $k$-coloring of $G$.
Thus we can model this problem as a list coloring problem by assigning a list of size $1$ to each of the precolored vertices, and a list equal to $\{1, \ldots, k\}$ to the remaining ones. From now on, we denote this problem by \kPre{k}.

A long-standing and still open question about coloring problems is whether one can decide in polynomial time if a graph with diameter two can be properly colored using at most three colors (see e.g. \cite{K.11,P.16}), and only recently the answer for the $3$-\textsc{Coloring} problem on graphs with diameter at most three has been settled negatively by Mertzios and Spirakis~\cite{MS.16}. Here we propose the investigation of analogous questions concerning list colorings of bipartite graphs.

The famous Hajós Theorem~\cite{H.61} states that every non-$k$-colorable graph can be constructed from the complete graph $K_{k+1}$ by iteratively applying one of three defined operations. Gravier~\cite{G.96} has proved an analogous theorem, showing that every non-$k$-choosable graph can be constructed from complete bipartite graphs by iteratively applying one of three operations (the same as Hajós' operations, with the exception of one, which is an adaptation  of the corresponding Hajós operation  to bipartite graphs).
This is one of the reasons why it can be of interest to investigate list colorings of bipartite graphs (see, for example, \cite{AT.92,K.93,HJP.15,HT.96}); in particular, characterizations of non-$k$-choosable complete bipartite graphs for certain values of $k$ have also been given  (see e.g.~\cite{MRS.91,Donnel,ST.95}).

For graphs $G$ and $H$, we say that $G$ is \emph{$H$-free} if $G$ has no copy of $H$ as an induced subgraph.
Interestingly enough, even though there are some results concerning list coloring problems on $P_k$-free bipartite graphs (see, for example,~\cite{HJP.15,K.93}), 
to the best of our knowledge, no work deals with bipartite graphs of small diameter directly, except for a result by Jansen and Scheffler~\cite{JS.97} that proves that $3$-\textsc{List Coloring} is $\NP$-complete on complete bipartite graphs.
Concerning $P_k$-free graphs, Kratochvíl~\cite{K.93} proved that \kPre{5} is $\NP$-complete on $P_{13}$-free bipartite graphs, while Huang et al.~\cite{HJP.15} showed that \kPre{4} is $\NP$-complete on $P_{10}$-free chordal bipartite graphs and that \kLCol{4} is $\NP$-complete on $P_8$-free chordal bipartite graphs.
Note that if $G$ has no induced $P_k$, then $G$ has diameter at most $k-2$.
Therefore, the aforementioned results of Huang et al.~\cite{HJP.15} give us that \kPre{4} is $\NP$-complete on chordal bipartite graphs with diameter at most~$8$, and that \kLCol{4} is $\NP$-complete on chordal bipartite graphs with diameter at most~$6$.
Here, we improve the first result of Huang et al.~\cite{HJP.15} with respect to the diameter by showing that \kPre{3} is $\NP$-complete on bipartite graphs with diameter~four. Also, we give a linear algorithm that solves \kPre{k}, for each fixed $k$, on complete bipartite graphs (diameter two). Our algorithm is an improvement for complete bipartite over previous more general algorithms that work on $P_5$-free~graphs~(Ho\`ang et al.~\cite{HKLSS.10}), and on $(rP_1 + P_5)$-free graphs~(Couturier et al.~\cite{CGKP.15}); in particular, ours is an $\FPT$ algorithm parameterized by $k$, while the latter algorithms are \XP.
As we will see, this leaves as the only open cases the complexity of \kPre{3} and related problems, when restricted to bipartite graphs with diameter three.

It is well-known that a $k$-coloring can also be seen as a homomorphism to $K_k$ (the complete graph on $k$ vertices).
Given graphs $G$ and $H$, a \emph{homomorphism from $G$ to $H$} is a function $f:V(G)\rightarrow V(H)$ that respects edges, i.e., such that $f(u)f(v)\in E(H)$ whenever $uv\in E(G)$.
When $H$ is fixed, the $H$-\textsc{Homomorphism} problem consists in deciding whether $G$ has a homomorphism to $H$, while the \textsc{List $H$-Homomorphism} is defined in a similar way as the $k$-\textsc{List Coloring} problem, i.e. each vertex $u$ of $G$ can only be mapped to $x\in V(H)$ if $x$ belongs to the list assigned to $u$.
Hell and Ne\v{s}et\v{r}il~\cite{HN.90} proved that \textsc{$H$-Homomorphism} is polynomial if $H$ is bipartite, and $\NP$-complete otherwise.
A dichotomy is also known for the \textsc{List $H$-Homomorphism} problem: Feder and Hell~\cite{FH.98} proved that if $H$ is a \emph{reflexive} graph (a graph is reflexive if every vertex of $H$ has a loop), then the problem is solvable in polynomial time if $H$ is a bipartite interval graph and $\NP$-complete otherwise.
Furthermore, for loopless graphs, Feder et al.~\cite{FHH.99} also proved that if $H$ is bipartite and $\overline{H}$, the \emph{complement} graph of $H$, is a circular-arc graph, then \textsc{List $H$-Homomorphism} can be solved in polynomial time; otherwise, the problem is $\NP$-complete.

A homomorphism $f$ from a graph $G$ to a graph $H$ is \emph{surjective} if every vertex $x \in V(H)$ is the image of some vertex $u \in V(G)$, and it is \emph{edge-surjective} if every edge $xy \in E(H)$ is the image of some edge $uv \in V(G)$.
Observe that if $f$ is edge-surjective and $H$ has no isolated vertices, then $f$ is also surjective.
 If $H$ is a subgraph of $G$ and $f$ is an homomorphism from $G$ to $H$ such that $f(v) = v$ for every $v \in V(H)$, we say that $f$ is a \emph{retraction of $G$ to $H$}.
We denote the related problems by \HHom, \HComp, and \textsc{Retract}, respectively.
Some authors use the term \emph{(edge-)compaction} to refer to (edge-)surjective homomorphism, but we prefer to use a more descriptive term.
Also, if \textsc{Retract} is restricted to instances $(G,H)$ such that $H$ is isomorphic to a fixed graph $F$, then we write \textsc{Retract to $F$} to denote the related problem.

Vikas~\cite{V.04} showed that problem {\HHom} reduces to {\HComp} and to \textsc{Retract to $H$}.
Therefore, it follows from the result by Hell and Nešetřil~\cite{HN.90} that all three problems are $\NP$-complete when $H$ is not bipartite, as it suffices to add a dummy copy of $H$ to $G$ so as to force the obtained graph $G'$ to have a surjective homomorphism to $H$ if and only if $G$ has a homomorphism to $H$. As for the remaining cases, since the late 1980's there has been interest in the complexity of these problems when $H$ is a bipartite graph, with related questions being posed by many authors (see e.g.~\cite{MP.15,V.17}).
Let $C_4^\odot$ denote the reflexive cycle on~four vertices.
Vikas~\cite{V.99} proved that \textsc{Edge-Surjective $C_4^\odot$-Homomorphism} is $\NP$-complete, and only recently \textsc{Surjective $C_4^\odot$-Homomorphism} has been also proved  to be $\NP$-complete by Martin and Paulusma~\cite{MP.15}. Golovach et al.~\cite{GMP.12} proved that {\HHom} is polynomial when $H$ is a path, and $\NP$-complete for many other cases (e.g. linear forests and trees of pathwidth at most~2).
Also, Golovach et al.~\cite{GJMPS.19} recently proved that {\HHom} is $\NP$-complete when $H$ has exactly $2$ vertices $u$ and $v$ such that $\{uu,vv\}\subseteq E(H)$, provided $uv\notin E(H)$.
We refer the reader to the nice survey by Bodirsky et al.~\cite{BKM.12} on surjective homomorphisms and related problems.
Martin and Paulusma~\cite{MP.15} also relate {\HComp} to many other problems (e.g. vertex cut-sets, $H$-partitions, and biclique cover), showing that various open problems and \textsc{Surjective $C_4^\odot$-Homomorphism}, which they prove to be \NP-complete, are equivalent.

In particular,  the $\NP$-completeness of \textsc{Retract to $C_6$} (Feder et al.~\cite{FHH.99}), and of {\Comp} (Vikas~\cite{V.99}) are known since 1999, but only recently the $\NP$-completeness of {\Hom} has been settled (Vikas~\cite{V.17}), even though this had been asked by Hell and Ne\v{s}et\v{r}il back in 1988~\cite{V.17}.
Here we give a stronger  $\NP$-completeness result for \textsc{Retract to $C_6$} that we then use in some of our proofs.
Namely, letting $H \cong C_6$ and $G=(X\cup Y,E)$ be bipartite, we show that  \textsc{Retract to $C_6$} is $\NP$-complete even if $V(H)\cap Y$ dominates $X$, and each $y \in V(H) \cap Y$ is at distance at most~2 from $y'$, for every $y'\in Y$.
We then use this result to show that {\kPre{3}}, {\Comp}, and {\Hom} are $\NP$-complete even when restricted to bipartite graphs with diameter four.
Our $\NP$-completeness proof for {\Hom} produces a smaller graph than the one presented by Vikas~\cite{V.17}, needing only a linear number of new vertices in the construction against a quadratic number used by Vikas~\cite{V.17}, which allows us to present a simpler proof. In addition, we make a small observation that, by letting $M_k$ denote the graph obtained from the complete bipartite graph $K_{k,k}$ by removing a perfect matching, we get that \textsc{Surjective $M_k$-Homomorphism}, \textsc{Edge-Surjective $M_k$-Homomorphism}, and \textsc{Retract to $M_k$} are all $\NP$-complete on graphs of diameter three.

Vikas~\cite{V.17} also points out, without explicitly proving it, that his reduction can be generalized to a proof that \textsc{Surjective $C_{2k}$-Homomorphism} is $\NP$-complete for every $k\ge 3$.
Unfortunately, we have not managed to directly adapt our reduction to this case.
Finally, we mention that there is still interest in the $C_k$-homomorphism problem for odd $k$ when restricted to $P_\ell$-free graphs because of its ties with the long-standing open problem of 3-coloring $P_\ell$-free graphs (see e.g. a recent paper by Chudnovsky et al.~\cite{CHOSZ.20}).

Given a graph $G$, a \emph{biclique of $G$} is a pair of vertex subsets $(A,B)$ forming a complete bipartite subgraph of $G$.
The \emph{bipartite complement} of a bipartite graph $G=(A\cup B, E)$ is the bipartite graph $\overline{G}_{\cal B}=(A\cup B,E')$ containing the non-edges between $A$ and $B$.
In the $k$-\textsc{Biclique} problem, the task is to decide whether $V(G)$ can be partitioned into $k$ bicliques. 
Mohammad et al.~\cite{HMSS.07} showed that, letting $k$ be part of the input, $k$-\textsc{Biclique} is $\NP$-complete even when restricted to bipartite graphs.
An attempt to show that $3$-\textsc{Biclique} is $\NP$-complete on bipartite graphs was presented by Fleischner et al.~\cite{FMPS.09}.
Unfortunately, as we show in Section~\ref{sec:3biclique}, there is a mistake in their proof.
Nevertheless, applying our result for \textsc{Surjective $C_6$-Homomorphism}, we get that the $3$-\textsc{Biclique} problem is indeed $\NP$-complete on bipartite graphs, even when the bipartite complement of $G$ has diameter~four.
It is worth mentioning that Martin and Paulusma~\cite{MP.15} showed that $2$-\textsc{Biclique} is $\NP$-complete on general graphs (with a very technical reduction that uses an abstract algebraic meta-theorem), and that Fleischner et al.~\cite{FMPS.09} showed that it is polynomial on bipartite graphs.
Hence, the \NP-completeness of $3$-\textsc{Biclique} on bipartite graphs is best possible in terms of $k$.

We need two more definitions before presenting our last results.
Given a proper $k$-coloring $f$ of a graph $G$, a vertex $v$ is called a \emph{b-vertex} if the neighborhood of $v$ contains one vertex of each color (distinct from that of $v)$, and $f$ is a \emph{$k$-fall-coloring of $G$} if every vertex of $G$ is a b-vertex.
In the $k$-\textsc{Fall Coloring} problem, we ask whether an input graph $G$ admits a $k$-fall-coloring.
We show that if $3$-\textsc{Fall Coloring} were $\NP$-complete on bipartite graphs with diameter three, then we would get a complete dichotomy for the list coloring problems on bipartite graphs with diameter constraints. Also, this would answer a question posed by Kratochvíl et al.~\cite{KTV.02}.
Although we do not know whether $3$-\textsc{Fall Coloring} is $\NP$-complete on bipartite graphs with diameter three, we show that it is $\NP$-complete on bipartite graphs with diameter four, strengthening a result of Laskar and Lyle~\cite{LL.09}.

\smallskip

\noindent\textbf{Organization}.  In~Section~\ref{sec:preliminaries} we present the formal notation and definitions used in this article.
In Section~\ref{sec:kvsdiam} we give an almost complete classification for the investigated list coloring problems in terms of the number of colors and the diameter of the input graph $G$, taking into account the results presented in the current article.
In Section~\ref{sec:3PreExt} we show our hardness result for \textsc{Retract to $C_6$} and use it to prove that \kPre{3} is $\NP$-complete on bipartite graphs with diameter four.
In Section~\ref{sec:C6Hom} we show that {\Comp} and {\Hom} are $\NP$-complete on bipartite graphs with diameter four.
In Section~\ref{sec:3biclique}, we point out the flaw in the hardness proof for $3$-\textsc{Biclique} on bipartite graphs presented by Fleischner et al.~\cite{FMPS.09}, and show how to obtain the result using the results presented in Section~\ref{sec:C6Hom}.
Finally, in Section~\ref{sec:3fall} we provide a reduction from $3$-\textsc{Fall Coloring} to \kPre{3} that preserves the diameter of the input graph, and prove that $3$-\textsc{Fall Coloring} is $\NP$-complete on bipartite graphs with diameter four.



\section{Definitions and notation}\label{sec:preliminaries}
All graphs that we consider are simple, that is, they do not have loops nor multiple edges.
We can also assume that the graphs are connected, as otherwise it suffices to solve the problems on the connected components.
For basic definitions on graph theory, we refer the reader to~\cite{W96}.

For $u,v \in V(G)$, we denote by ${\sf dist}(u,v)$ the length of a shortest path between $u$ $v$, and it is called the \emph{distance between $u$ and $v$ in $G$}.
The \emph{diameter} of a graph $G$ is the maximum distance between any pair of vertices of $G$. We denote by $\mathcal{B}$ the class of bipartite graphs. Also, for a fixed positive integer $d$, we denote by ${\cal B}_d$ the class of bipartite graphs with diameter at most $d$, and by $\mathcal{D}_d$ the class of graphs with diameter at most $d$.
A set $X \subseteq V(G)$ is a \emph{dominating set} of $G$ if every vertex of $G$ is either in $X$ or has a neighbor in $X$.
Similarly, we say that $A \subseteq V(G)$ \emph{dominates} $B \subseteq V(G)$ if every vertex of $B$ is either in $A$ or has a neighbor in $A$.
For a graph $H \subseteq G$, we denote by $N_H(v)$ the set of neighbors of $v$ that are contained in $V(H)$.

For an integer $\ell \geq 1$, we denote by $[\ell]$ the set $\{1, \ldots, \ell\}$. Given problems $P$ and $P'$, we write $P\preceq P'$ to denote that there exists a polynomial reduction from $P$ to $P'$ (hence, $P'$ is at least as hard as $P$). Also, given a graph class ${\cal G}$ and a problem $P$, we denote $P$ restricted to ${\cal G}$ by $P|_{{\cal G}}$.
Given sets $A$ and $B$, a function $f:A\rightarrow B$, and an element $b\in B$, the set of elements of $A$ whose image is $b$ is denoted by $f^{-1}(b)$. Also, given $X\subseteq A$, we denote the image of $X$ by $f(X)$, i.e., $f(X) = \{f(x)\mid x\in X\}$. If $f(X) = \{b\}$, we abuse notation and write $f(X) = b$.

Given a graph $G$ and a positive integer $k$, we say that a function $f:V(G)\rightarrow[k]$ is a \emph{proper $k$-coloring of $G$} if $f(u)\neq f(v)$ for every $uv\in E(G)$.
A vertex $v$ is called a \emph{b-vertex} if $f(N[v]) = [k]$, and $f$ is a \emph{$k$-fall-coloring of $G$} if every vertex of $G$ is a b-vertex.
A \emph{list assignment of $G$} is a function $L:V(G)\rightarrow 2^{\mathbb{N}}$ that assigns to each vertex $u \in V(G)$ a finite subset $L(u)$ of positive integers.
A \emph{partial $k$-coloring of $G$} is a function $p:V' \to [k]$ where $V' \subseteq V(G)$ and $p$ is a proper coloring of the subgraph of $G$ induced by $V'$. If $V'$ is not given, we denote it by ${\sf dom}(p)$. Given a partial $k$-coloring $p$ of $G$, we say that $f$ is a \emph{$k$-extension of $p$} if $f$ is a proper $k$-coloring of $G$ such that $f(u) = p(u)$ for every $u\in {\sf dom}(p)$.

A \emph{biclique} of a graph $G$ is a pair of vertex non-empty subsets $(A,B)$ such that $G' = (A\cup B, E')$  is a complete bipartite graph, where $G'$ is the subgraph of $G$ with vertex set $A \cup B$, and edge set $E'$ containing  the edges of $G$ between $A$ and $B$. A \emph{$k$-biclique partition} of $G$ is a set of $k$ disjoint bicliques $\{(A_1,B_1),\cdots,(A_k,B_k)\}$ such that $\bigcup_{i=1}^k (A_i\cup B_i) = V(G)$.

The coloring problems investigated in this article are formally defined below, adopting the notation from other papers; see e.g.~\cite{P.16}.
In each of the problems defined below, we consider $k$ to be a fixed integer with $k \geq 1$.

\vspace{1mm}
\begin{myproblem}
  \problemtitle{\textsc{$k$-List Coloring}}
  \probleminput{A graph $G = (V,E)$ and a list assignment $L$ s.t. $|L(u)|\le k$ for every $u\in V(G)$.}
  \problemquestion{Does $G$ admit a proper coloring $f$ s.t. $f(u)\in L(u)$ for every $u\in V(G)$?}
\end{myproblem}

\vspace{1mm}
\begin{myproblem}
  \problemtitle{\textsc{List $k$-Coloring}}
  \probleminput{A graph $G = (V,E)$ and a list assignment $L$ s.t. $L(u) \subseteq [k]$ for every $u\in V(G)$.}
  \problemquestion{Does $G$ admit a proper coloring $f$ s.t. $f(u)\in L(u)$ for every $u\in V(G)$?}
\end{myproblem}

\vspace{1mm}
\begin{myproblem}
  \problemtitle{\textsc{$k$-PreExt} }
  \probleminput{A graph $G = (V,E)$, a positive integer $k$, and a partial $k$-coloring $p$ of $G$.}
  \problemquestion{Does $p$ have a $k$-extension?}
\end{myproblem}

\vspace{1mm}
\begin{myproblem}
  \problemtitle{\textsc{$k$-Biclique Partition}}
  \probleminput{A graph $G = (V,E)$.}
  \problemquestion{Does $G$ have a $k$-biclique partition?}
\end{myproblem}

\vspace{1mm}
\begin{myproblem}
  \problemtitle{\textsc{$k$-Fall-Coloring}}
  \probleminput{A graph $G = (V,E)$.}
  \problemquestion{Does $G$ admit a $k$-fall-coloring?}
\end{myproblem}
\vspace{1mm}

The investigated homomorphism problems are listed below, where $H$ is considered to be a fixed graph whenever it is part of the name of a problem.

\vspace{1mm}
\begin{myproblem}
  \problemtitle{\textsc{Surjective $H$-Homomorphism}}
  \probleminput{A graph $G = (V,E)$.}
  \problemquestion{Does $G$ have a surjective homomorphism to $H$?}
\end{myproblem}

\begin{myproblem}
  \problemtitle{\textsc{Edge-Surjective $H$-Homomorphism}}
  \probleminput{A graph $G = (V,E)$.}
  \problemquestion{Does $G$ admit an edge-surjective $H$-homomorphism?}
\end{myproblem}

\begin{myproblem}
  \problemtitle{\textsc{Retract}}
  \probleminput{A graph $G = (V,E)$ and a subgraph $F \subseteq G$.}
  \problemquestion{Is there a retraction of $G$ to $F$?}
\end{myproblem}

\begin{myproblem}
  \problemtitle{\textsc{Retract to $H$}}
  \probleminput{A graph $G = (V,E)$ and a subgraph $F \subseteq G$ isomorphic to $H$.}
  \problemquestion{Is there a retraction of $G$ to $F$?}
\end{myproblem}

A \emph{3-uniform hypergraph} is a hypergraph such that each hyperedge has size exactly~3. A \emph{2-coloring of a hypergraph $G$} is a function $f:V(G)\rightarrow \{1,2\}$ such that $f(e) = \{1,2\}$ for every hyperedge $e\in E(G)$ (i.e., no hyperedge is monochromatic).
Lovász~\cite{L.73} showed that the problem of 2-coloring 3-uniform hypergraphs, formally defined below, is $\NP$-complete. This problem will be used in the reductions of Sections~\ref{sec:3PreExt} and~\ref{sec:3fall}.

\begin{myproblem}
  \problemtitle{\textsc{3-Uniform Hypergraph 2-Coloring} (abbreviated as \textsc{3-Uniform 2-Col)}}
  \probleminput{A 3-uniform hypergraph $G = (V,E)$.}
  \problemquestion{Does $G$ admit a 2-coloring?}
\end{myproblem}

From here on, we let $n$ denote the number of vertices of the input graph of the problem under consideration.

\section{List coloring vs. diameter}\label{sec:kvsdiam}

In this section, we investigate the complexity of \kPre{k}, \LkCol{k}, and \kLCol{k} on bipartite graphs with diameter $d$, for a fixed integer $d \geq 2$. 
We provide a complete  picture about the hardness of these problems, leaving as open cases only \kPre{3} and \LkCol{3} for $d=3$.

First, notice that there is a straightforward reduction from \kPre{k} to \LkCol{k}.
Indeed, for each $v \in V(G)$, if $v$ is precolored with color $p(v)$, then define $L(v) = \{p(v)\}$; otherwise, define $L(v) = [k]$.
Furthermore, \LkCol{k} is a particular case of \kLCol{k},  since each vertex in an instance of the former problem has a list assignment of size at most $k$.
From these two remarks, we get
\begin{equation}
\kPre{k} \preceq \LkCol{k} \preceq \kLCol{k}.\label{eq:complexityorder}
\end{equation}

Since the reductions discussed above do not change the input graph, we get that Equation~(\ref{eq:complexityorder}) holds when we restrict the problems to graphs in $\mathcal{B}_d$.
We remark that there are polynomial-time algorithms for the \textsc{$2$-List Coloring} problem by Lovász~\cite{L.73} and Vizing\cite{V.76}, and hence for \kPre{2} and \LkCol{2} as well. In what follows we focus on the complexity of these problems for $k\ge 3$.
Observe that if \kPre{k} is proved to be $\NP$-complete on bipartite graphs for some $k$, then the same holds for the other problems, and the next result shows that it also implies that $\kPre{(k+1)}|_{{\cal B}_3}$ is $\NP$-complete.

\begin{proposition}\label{prop:complexityOrder}
Let $k \geq 1$ be a fixed  integer.
Then $\kPre{k}|_{{\cal B}} \preceq \kPre{(k+1)}|_{{\cal B}_3}$.
\end{proposition}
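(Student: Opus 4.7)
The plan is to give a polynomial-time reduction that takes an arbitrary instance $(G,p)$ of $\kPre{k}|_{{\cal B}}$ and outputs an instance $(G',p')$ of $\kPre{(k+1)}|_{{\cal B}_3}$, using the extra color $k+1$ as a ``forbidden'' color that pins every original vertex to a color in $[k]$, while simultaneously shrinking the diameter through two new universal-type vertices.

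First I would, invoking the connectedness convention from Section~\ref{sec:preliminaries}, assume $G$ is connected and fix its bipartition $(X,Y)$. Then I build $G'$ by adding two new vertices $x^*$ and $y^*$, placed on the $X$ and $Y$ sides respectively, and joining $x^*$ to every vertex of $Y$ and $y^*$ to every vertex of $X$ (in particular, $x^*y^*\notin E(G')$). The partial coloring $p'$ coincides with $p$ on ${\sf dom}(p)$ and additionally sets $p'(x^*)=p'(y^*)=k+1$. Clearly $G'$ is bipartite, $|V(G')|=|V(G)|+2$, and the construction is polynomial.

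For correctness, any $k$-extension $f$ of $p$ lifts to a $(k+1)$-extension $f'$ of $p'$ by setting $f'(x^*)=f'(y^*)=k+1$: the only new edges join $x^*$ or $y^*$ to vertices colored in $[k]$, so no conflict arises. Conversely, in any $(k+1)$-extension $f'$ of $p'$, every $v\in Y$ is adjacent to $x^*$ and thus cannot receive $k+1$, and every $u\in X$ is adjacent to $y^*$ and thus cannot receive $k+1$; hence the restriction of $f'$ to $V(G)$ is a proper $k$-coloring of $G$ using colors in $[k]$ that extends $p$.

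The only point that requires any real work, and which I expect to be the main (indeed essentially only) obstacle, is the diameter bound. Using that $G$ is connected and (assuming $|V(G)|\ge 2$) has no isolated vertex, I would verify case by case that every pair of vertices of $G'$ is at distance at most $3$: two vertices of $X\cup\{x^*\}$ are connected in two steps through $y^*$, and symmetrically two vertices of $Y\cup\{y^*\}$ are connected through $x^*$; for $u\in X$ and $v\in Y$ with $uv\notin E(G)$, pick any neighbor $w\in Y$ of $u$ in $G$ and observe that $u\,w\,x^*\,v$ is a path of length $3$ in $G'$; and $x^*$ and $y^*$ are joined by a length-$3$ path going through any edge of $G$. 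This places $G'$ in ${\cal B}_3$ and completes the reduction.
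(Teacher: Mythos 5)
Your construction, precoloring $p'(x^*)=p'(y^*)=k+1$, and correctness argument are exactly the paper's own proof of this proposition (the paper names the new vertices $x$ and $y$), and your diameter case analysis matches the paper's. The only blemish is the claim that two vertices of $X\cup\{x^*\}$ are connected in two steps \emph{through $y^*$}: since $x^*y^*\notin E(G')$, the pair $(x^*,u)$ with $u\in X$ instead needs the path $(x^*,w,u)$ through a neighbor $w\in Y$ of $u$ (precisely the subcase the paper singles out), which your no-isolated-vertex assumption already licenses, so the bound ${\sf diam}(G')\le 3$ still holds.
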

\begin{proof}
Let $G\in {\cal B}$ with parts $X$ and $Y$, $p$ be a partial $k$-coloring of $G$, and $(G',p')$ be an instance of   $\kPre{(k+1)}|_{{\cal B}_3}$ obtained from $(G,p)$ as follows.
To obtain $G'$, add to $G$ two new vertices $x$ and $y$, all edges from $x$ to vertices in $Y$, and all edges from $y$ to vertices in $X$; therefore $G'$ is a bipartite graph with parts $X'= X\cup \{x\}$ and $Y'=Y\cup \{y\}$. Let $p'$ be obtained from $p$ by giving color $k+1$ to both $x$ and $y$.
Now, any extension of $p'$ defines an extension of $p$ and vice-versa, since color $k+1$ can only appear in the new vertices.
Let us argue about the diameter of $G'$.
Recall that we can assume that $G$ is connected, and therefore it has no isolated vertices.

Consider a pair $u,v\in V(G')$.
If they are within the same part, say $X'$, then either they are both adjacent to $y$, or $u=x$ in which case $(x,w,v)$ is a path, where $w$ is any neighbor of $v$ in $Y$ (recall that $G$ has no isolated vertices).
If $u \in X'$ and $v\in Y'$, then either $\{u,v\}\cap \{x,y\}=\emptyset$, in which case $(u,y,w,v)$ is a $(u,v)$-path for any neighbor $w$ of $v$ with $w \in X$; or $\{u,v\}=\{x,y\}$ and $(x,w,w',y)$ is a $(u,v)$-path for any edge $ww'\in E(G)$; or $|\{u,v\}\cap \{x,y\}|=1$, in which case $uv\in E(G')$. Thus, we get that $G'$ has indeed diameter at most~3.\end{proof}

Table~\ref{table:diam3} (resp. Table~\ref{table:diamk}) presents the complexity of the problems discussed in this section for $k=3$ (resp. for every fixed $k \geq 4$) restricted to bipartite graphs with diameter at most 2, 3, and 4. Let us explain how these tables are filled.
In Section~\ref{sec:3PreExt} we prove that $\kPre{3}|_{{\cal B}_4}$ is $\NP$-complete.
Note that, from Equation~(\ref{eq:complexityorder}) and the fact that ${\cal B}_d \subseteq {\cal B}_{d+1}$, we get that the forth row downwards in Table~\ref{table:diam3} is filled with $\NP$-completeness results.
Also, by Proposition~\ref{prop:complexityOrder}, we get that for every fixed $k \ge 4$, row~$3$ downwards in Table~\ref{table:diamk} is filled with $\NP$-completeness results.
When $G$ is a complete bipartite graph, the polynomiality of \LkCol{k} and \kPre{k} follows from  previous results by Couturier et al.~\cite{CGKP.15} and Hoàng et al.~\cite{HKLSS.10}.
In Section~\ref{sec:complBipPoly}, we briefly discuss those results and show a simple algorithm for \LkCol{k} that is better than both when the input graph is a complete bipartite graph.
Finally, Jansen and Scheffler~\cite{JS.97} proved that \kLCol{3} is $\NP$-complete on complete bipartite graphs.
The last two sentences justify the second row of both tables.

\begin{table}
\centering
\begin{tabular}[t]{|c|c|c|c|}
\hline
Diameter & \kPre{3} & \LkCol{3} & \kLCol{3} \\
\hline
2 & $\P$ & $\P$\ \cite{HKLSS.10,CGKP.15} & $\NPc$\ \cite{HJP.15} \\
\hline
3 & open & open & $\NPc$ \\
\hline
4 & $\NPc$\ (Sec. \ref{sec:3PreExt}) & $\NPc$ & $\NPc$ \\
\hline
\end{tabular}
\medskip
\caption{Row labeled $i$  presents the complexity of the corresponding problems restricted to bipartite graphs with diameter at most $i$.}\label{table:diam3}
\end{table}

\begin{table}
\begin{center}
\begin{tabular}[t]{|c|c|c|c|}
\hline
Diameter       & \kPre{k} & \LkCol{k} & \kLCol{k} \\
\hline
2 & $\P$ & $\P$\ \cite{HKLSS.10,CGKP.15} & $\NPc$ \\
\hline
3 & $\NPc$ {(Prop. \ref{prop:complexityOrder})} &  $\NPc$ & $\NPc$ \\
\hline
4 & $\NPc$ & $\NPc$ & $\NPc$ \\
\hline
\end{tabular}
\medskip
\caption{Row labeled $i$ presents the complexity of the corresponding problems restricted to bipartite graphs with diameter at most $i$, for every fixed integer $k \geq 4$.}\label{table:diamk}
\end{center}
\vspace{-.4cm}
\end{table}

\subsection{\textsc{List \texorpdfstring{$k$}{k}-Coloring} is polynomial on complete bipartite graphs}\label{sec:complBipPoly}

We begin this section by discussing some previously known results for \LkCol{k}.
For $r \geq 1$, let $rH$ denote the graph formed by $r$ disjoint copies of a graph $H$, and by $G + H$ we denote the graph formed by the disjoint union of the graphs $G$ and $H$.

Hoàng et al.~\cite{HKLSS.10} considered the \LkCol{k} problem in $P_5$-free graphs.
A key ingredient of their algorithm is a result by Bacsó and Tuza~\cite{Bacso.Tuza.90} stating that every $P_5$-free graph contains a \emph{dominating structure}, that is, an induced subgraph $D \subseteq G$ that is isomorphic to $P_3$ or to a complete graph, and $V(D)$ is a dominating set of $G$.
Their algorithm starts by performing a brute-force search for a dominating structure $D$ of size at most $k$ over all $\binom{n}{k}$ sets of vertices of size $k$ in $G$.
If none is found, then by the aforementioned result by Bacsó and Tuza~\cite{Bacso.Tuza.90}, $G$ contains a clique of size at least $k+1$ and thus it is not $k$-colorable.
Otherwise, they guess a coloring of $D$ and reduce the original \LkCol{k} instance to solving  at most $(kn)^{k^5}$ instances of \LkCol{(k-1)}, which are then solved recursively.

Couturier et al.~\cite{CGKP.15} generalized the result by Hoàng et al.~\cite{HKLSS.10} to $(rP_1 + P_5)$-free graphs.
They showed that any such graph that is $L$-colorable, where $L$ is a list assignment of $G$ with $L(u) \subseteq [k]$ for every $u \in V(G)$, admits another kind of dominating set of size at most $f(k)$, for some computable function $f$, and their algorithm for \LkCol{k} also depends on finding a dominating set of size at most $f(k)$ in $G$  and on solving $O(n^{f(k)})$ new instances.

We provide a simple algorithm for \LkCol{k} in complete bipartite graphs that has the advantage of being an {$\FPT$} algorithm, while the algorithms discussed above are clearly {\XP}.
 As mentioned before, this algorithm also solves \kPre{k} on complete bipartite graphs, by Equation~(\ref{eq:complexityorder}) and the fact that the reduction does not modify the input graph.


\begin{theorem}\label{thm:FPT-kCOl-diam2}
The $\LkCol{k}|_{{\cal B}_2}$ problem can be solved in time $\mathcal{O}(2^k \cdot k\cdot\log k \cdot n)$. In particular, it can be solved in linear time for every fixed integer $k \geq 1$.
\end{theorem}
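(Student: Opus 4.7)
The plan is to derive the theorem as a direct corollary of Lemmas~\ref{lemma:list_complementary_reduction} and~\ref{lemma:complementary_hitting_sets_algorithm}, with a quick bookkeeping argument for the running time. First, given an instance $(G,L)$ of $\LkCol{k}|_{{\cal B}_2}$ with parts $A$ and $B$, I would build the families $\mathcal{A}=\{L(u)\mid u\in A\}$ and $\mathcal{B}=\{L(v)\mid v\in B\}$ from the reduction of Lemma~\ref{lemma:list_complementary_reduction}. Since every list has size at most $k$, this step reads the input in time $\mathcal{O}(k\cdot n)$, and by construction $|\mathcal{A}|+|\mathcal{B}|\le n$.

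Next, I would feed $(\mathcal{A},\mathcal{B})$ to the algorithm of Lemma~\ref{lemma:complementary_hitting_sets_algorithm}, which decides in time $\mathcal{O}(2^k\cdot k^2\cdot n)$ whether there exists $S\subseteq[k]$ such that both $S$ and $\overline{S}$ are hitting sets of $\mathcal{A}$ and $\mathcal{B}$, respectively. By the correctness of the reduction (the ``if and only if'' in Lemma~\ref{lemma:list_complementary_reduction}), the answer for this instance is exactly the answer for the original $\LkCol{k}$ instance.

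If one also wants to produce a witness coloring in the positive case, the sufficiency direction of Lemma~\ref{lemma:list_complementary_reduction} is constructive: for each $a\in A$ assign an arbitrary color of $S\cap L(a)$, and for each $b\in B$ an arbitrary color of $\overline{S}\cap L(b)$; the sets are nonempty precisely because $S$ and $\overline{S}$ are hitting sets of $\mathcal{A}$ and $\mathcal{B}$. This post-processing step runs in $\mathcal{O}(k\cdot n)$. Summing the three phases gives the claimed $\mathcal{O}(2^k\cdot k^2\cdot n)$ bound, which is linear in $n$ when $k$ is fixed.

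There is essentially no technical obstacle left at this point: the structural content has already been extracted in Lemma~\ref{lemma:list_complementary_reduction}, and the exhaustive search over the $2^k$ subsets of $[k]$ in Lemma~\ref{lemma:complementary_hitting_sets_algorithm} does the algorithmic work. The only care needed is to ensure that both the reduction and the construction of the final coloring are carried out in a single linear scan of the lists so that they do not dominate the $\mathcal{O}(2^k\cdot k^2\cdot n)$ term from the hitting-set enumeration.
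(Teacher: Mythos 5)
Your proposal is correct and follows exactly the paper's route: the theorem is obtained by composing the reduction of Lemma~\ref{lemma:list_complementary_reduction} with the $\mathcal{O}(2^k \cdot k^2 \cdot n)$ algorithm of Lemma~\ref{lemma:complementary_hitting_sets_algorithm}, which is precisely what the paper does. Your additional bookkeeping (the $\mathcal{O}(k\cdot n)$ construction of the families and the constructive extraction of a witness coloring) is a harmless elaboration that the paper leaves implicit.
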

\begin{proof}
Consider a complete bipartite graph $G=(A\cup B, E)$ and a list assignment $L$ such that $L(u)\subseteq [k]$ for every $u\in V(G)$. It is easy to see that solving $\LkCol{k}$ on $G$ is equivalent to choosing an element $c(u)$ in $L(u)$ for each $u\in V(G)$ in such a way that the chosen elements for $A$ have no intersection with the chosen elements of $B$; more formally, $\{c(u)\mid u\in A\}\cap \{c(u)\mid u\in B\} = \emptyset$. Because choosing a color $c(u)$ for $u\in A$ actually makes this color available for every $v\in A$ and non-available for every $v\in B$, the problem actually consists of choosing which subset of colors $S$ can be used in $A$, knowing that $\overline{S} = [k]\setminus S$ will be the colors available for $B$. So, we can simply test, for each subset $S\subseteq [k]$, whether $S\cap L(u)\neq \emptyset$ for every $u\in A$, and $\overline{S}\cap L(u)\neq \emptyset$ for every $u\in B$. This takes time $\mathcal{O}(2^k \cdot k\cdot\log k \cdot n)$ since we need to check the intersection between either $S$ or $\overline{S}$ and $L(u)$ (which takes time $O(k\cdot \log k)$ if we consider the lists are ordered) for every $u\in V(G)$.
\end{proof}

In fact, Theorem~\ref{thm:FPT-kCOl-diam2} states that $\LkCol{k}|_{{\cal B}_2}$ is \emph{fixed-parameter tractable} parameterized by $k$, using terminology from parameterized complexity (cf. for instance~\cite{CyganFKLMPPS15}). On the other hand, by considering the disjoint union of instances of $\LkCol{k}|_{{\cal B}_2}$ it is easy to obtain a so-called \emph{\textsc{and}-cross-composition} (see again~\cite{CyganFKLMPPS15}), hence refuting, under standard complexity assumptions, the existence of \emph{polynomial kernels} for the problem $\LkCol{k}$, parameterized by $k$, restricted to the graph class consisting of the disjoint union of complete bipartite graphs. Note that $\LkCol{k}$ restricted to this class is also  fixed-parameter tractable, since one can solve the problem independently on each connected component. We leave as an open problem the existence of a polynomial kernel for  $\LkCol{k}|_{{\cal B}_2}$ parameterized by $k$.


\section{\texorpdfstring{\RetF{C_6}}{\textsc{Retract to C{\small6}}}}
\label{sec:3PreExt}

Our goal is to show that \RetF{C_6} is $\NP$-complete even under various constraints, which imply that the input graph is in ${\cal B}_4$.
These constraints shall be particularly useful in the next section, where we prove the $\NP$-completeness of {\Comp} and {\Hom} on ${\cal B}_4$.
The proof of the following theorem consists of an appropriate modification of a reduction by Kratochvíl~\cite{K.93}. Basically, we start from the {\sc 3-Uniform 2-Col} problem, instead of {\sc 1-in-3 SAT}, which allows us to combine a small gadget introduced by Kratochvíl (called $AB$-link) in a unique edge-gadget (instead, Kratochvíl had to combine them in eight distinct ways, depending on whether each variable was positive or negative). Additionally, since we do not need planarity, we can start with only six pre-colored vertices, which allows us to bound the diameter of $G$.

\begin{theorem}\label{theo:C6retract}
Let $G = (X\cup Y)$ be a bipartite graph, let $C\subseteq G$ be an induced $C_6$ in $G$, and let $Y_C=V(C)\cap Y$. Deciding whether $G$ has a retraction to $C$ is $\NP$-complete, even if $Y_C$ dominates $X$ and ${\sf dist}(h,y)\le 2$ for every $h\in Y_C$ and $y\in Y$.
\end{theorem}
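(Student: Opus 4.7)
The plan is to reduce from \textsc{3-Uniform 2-Col}, adapting Kratochv\'il's construction~\cite{K.93} to the $C_6$-retraction setting while engineering the extra domination and distance constraints. Given a 3-uniform hypergraph $H=(V,E)$, I would build a bipartite graph $G=(X\cup Y,E_G)$ containing a designated induced $C_6$ on vertices $h_1,x_1,h_2,x_2,h_3,x_3$ in cyclic order (with $h_i\in Y$, $x_i\in X$), so that $G$ retracts to $C$ if and only if $H$ admits a proper 2-coloring.

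First I would reinterpret retractions $f:G\to C$ as a list-coloring problem whose lists are induced by the gadget structure. For any $v\in X$ with $Y_C$-neighborhood $\{h_i:i\in I\}$, the image $f(v)$ must lie in the intersection of the $C$-neighborhoods of the $h_i$, which is empty if $|I|=3$, a singleton if $|I|=2$, and a $2$-set if $|I|=1$; an analogous statement holds for $Y$-vertices whose neighborhood meets $V(C)\cap X$. This algebraic toolkit lets me prescribe any $1$- or $2$-element list on a new vertex by attaching it to suitable vertices of $C$. In particular, for each $v\in V(H)$ I would add a $Y$-vertex $y_v$ together with an $X$-vertex adjacent to $y_v$, $h_1$, and $h_2$; the latter is forced to have image $x_1$, whence $f(y_v)\in\{h_1,h_2\}$, encoding the two colors of the target hypergraph 2-coloring.

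The crux is the hyperedge gadget: for each $e=\{u,v,w\}\in E$ I need a small bipartite subgraph attached to $y_u,y_v,y_w$ that extends to a retraction of the whole of $G$ if and only if $f(y_u),f(y_v),f(y_w)$ are not all equal. The main obstacle is that the naive construction --- a single common $X$-neighbor of $y_u,y_v,y_w$ together with some $h_i$ --- actually encodes the \emph{monochromatic} constraint, i.e., the dual of what is needed. To obtain \textsc{Not-All-Equal} I would decompose the condition into ``not all $h_1$'' and ``not all $h_2$'', and realize each of these via a small cascade of auxiliary $X$- and $Y$-vertices with forced or $2$-element lists, designed so that the only retraction-compatible completion fails exactly on the corresponding monochromatic configuration; this mirrors the logical gadgets of~\cite{K.93} transposed into the $C_6$-retraction language.

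Finally I would check the structural constraints required by the theorem, together with the correctness of the reduction. For each $y\in Y$ and each $h_i$ not already within distance~$2$ of $y$, I would attach a cheap $X$-vertex adjacent to $y$ and $h_i$ that by construction does not over-constrain any feasible retraction; since every added $X$-vertex has at least one $h_i$-neighbor, $Y_C$ dominates $X$ as required. Bipartiteness is preserved because every gadget alternates between $X$ and $Y$, and the size of $G$ is polynomial in $|H|$. The equivalence then follows cleanly: a 2-coloring of $H$ induces a retraction by assigning $f(y_v)\in\{h_1,h_2\}$ according to the color and extending through the gadgets, which succeed precisely because no hyperedge is monochromatic; conversely, any retraction yields a $\{h_1,h_2\}$-valued assignment on the $y_v$'s that the hyperedge gadgets force to be a proper 2-coloring of $H$. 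Membership in $\NP$ is immediate.
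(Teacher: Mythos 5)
Your overall strategy is the same as the paper's: a reduction from \textsc{3-Uniform 2-Col} adapting Kratochv\'il's construction, with vertex-variables forced to a 2-element list $\{h_1,h_2\}$ by attachment to $C$, hyperedge gadgets enforcing a not-all-equal constraint, and a final verification of bipartiteness, domination, and the distance-2 property. Your supporting observations are sound: the ``list toolkit'' is correct (an $X$-vertex adjacent to two vertices of $Y_C$ is forced to their unique common $C$-neighbor, so any $1$- or $2$-element list on one side is realizable), and your padding trick for the distance constraint works, since for any $h_i,h_j\in Y_C$ there is a common $C$-neighbor, so a new $X$-vertex adjacent to $y$ and $h_i$ imposes no constraint on $f(y)$ while being dominated by $Y_C$.

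However, there is a genuine gap at exactly the technical heart of the theorem: the hyperedge gadget is postulated, not constructed. You correctly identify that the naive common-neighbor gadget encodes the wrong (monochromatic) constraint and propose to decompose NAE into ``not all $h_1$'' and ``not all $h_2$,'' each realized by ``a small cascade of auxiliary vertices designed so that the only retraction-compatible completion fails exactly on the corresponding monochromatic configuration'' --- but no such cascade is exhibited, and it is precisely here that all the difficulty lies. A complete proof must verify \emph{both} directions for the gadget: (i) a forced-propagation argument showing that a monochromatic triple makes some vertex unmappable (in the paper, the images of $a_{1,j},v'_{1,j},d_{1,j},v''_{1,j},\ldots$ are forced step by step until the hyperedge vertex $e_j$ sees neighbors in all three classes of one side of $C$, which no vertex of $C_6$ can accommodate), and (ii) an explicit extension of the retraction through the gadget in \emph{every} non-monochromatic case (the paper checks four cases separately). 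Neither verification can be carried out on an unspecified gadget, and the existence of a gadget with the claimed failure mode is not obvious a priori --- it is the content of the theorem, not a routine transposition of~\cite{K.93}. Moreover, your two-cascades-per-hyperedge decomposition would add a further obligation the paper avoids (its single gadget blocks both monochromatic colors simultaneously): you would need to check that the two cascades attached to the same triple do not jointly over-constrain some legitimate non-monochromatic assignment. As written, the proposal is a correct plan matching the paper's architecture, but the proof of its central claim is missing.
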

\begin{proof}
We reduce from the {\sc 3-Uniform 2-Col} problem (recall the definition from Section~\ref{sec:preliminaries}). For this, consider a 3-uniform hypergraph $H=(V,E)$, and let $G$ be the bipartite graph with bipartition $(V,E)$ such that $ue\in E(G)$ if and only if $u\in e$. Add vertices $p^V_1,p^V_2,p^V_3$ and $p^E_1,p^E_2,p^E_3$ to parts $V$ and $E$, respectively, and make the subgraph induced by these vertices be the cycle $C = (p^V_1,p^E_2,p^V_3,p^E_1,p^V_2,p^E_3,p^V_1)$.
Add an edge between each $v\in V$ and $p^E_3$. This ensures that any retraction $f$ from $G$ to $C$ is such that $f(V) = \{p^V_1,p^V_2\} = N_C(p_3^E)$.
Now, for each hyperedge $e\in E$, we replace some of the edges incident to $e$ with an edge gadget defined as follows. For easier reference, let $V=\{v_1,\ldots,v_n\}$ and $E = \{e_1,\ldots,e_m\}$. Let $e_j\in E$, and let $i_1,i_2,i_3$ be the indices of the vertices within $e_j$.
Remove edges $v_{i_1}e_j$ and $v_{i_2}e_j$ from $G$, and replace them with the gadget of~Figure~\ref{fig:edgegadget}. For better visibility, sometimes we make more than one copy of some of the vertices of $C$ in the figure (for instance, vertex $p^E_3$ is represented 3 different times, but all occurrences correspond to the same vertex).  Dashed and solid vertices are used to represent the bipartition of $G$. The purpose of this gadget is to ensure that hyperedge $e_j$ cannot be monochromatic, as discussed below.

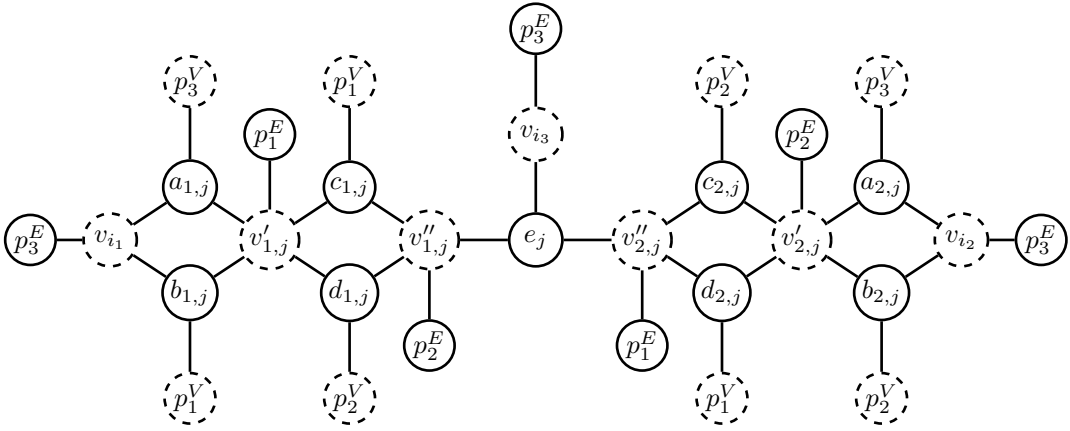
\begin{figure}[thb]
\begin{center}
  \begin{tikzpicture}[scale=.7]
  \pgfsetlinewidth{1pt}
  \tikzset{vertex/.style={circle, minimum size=0.7cm, draw, inner sep=1pt}}
  \tikzset{simple/.style={circle, minimum size=0.3cm, draw, inner sep=1pt}}

  \node [vertex] (ej) at (0,0){$e_j$};
  \node [vertex,dashed] (vpp1j) at (-2,0){$v''_{1,j}$};
  \node [vertex,dashed] (vp1j) at (-5,0){$v'_{1,j}$};
  \node [vertex,dashed] (v1j) at (-8,0){$v_{i_1}$};

  \node [simple] (d1j) at (-3.5,-1){$d_{1,j}$};
  \node [simple] (c1j) at (-3.5,1){$c_{1,j}$};
  \node [simple] (b1j) at (-6.5,-1){$b_{1,j}$};
  \node [simple] (a1j) at (-6.5,1){$a_{1,j}$};

  \node [simple] (p1E2) at (-2,-2){$p^E_2$};
  \node [simple,dashed] (p1V2) at (-3.5,-3){$p^V_2$};
  \node [simple,dashed] (p1V1) at (-6.5,-3){$p^V_1$};

  \node [simple,dashed] (p1V12) at (-3.5,3){$p^V_1$};
  \node [simple] (p1E1) at (-5,2){$p^E_1$};
  \node [simple,dashed] (p1V3) at (-6.5,3){$p^V_3$};

  \node [simple] (p1E3) at (-9.5,0){$p^E_3$};

  \draw (ej)--(vpp1j)--(p1E2) (vpp1j)--(d1j)--(p1V2) (vpp1j)--(c1j)--(p1V12) (c1j)--(vp1j)--(d1j) (vp1j)--(p1E1) (vp1j)--(b1j)--(v1j)--(p1E3) (vp1j)--(a1j)--(v1j) (b1j)--(p1V1) (a1j)--(p1V3);

  \node [vertex,dashed] (vpp2j) at (2,0){$v''_{2,j}$};
  \node [vertex,dashed] (vp2j) at (5,0){$v'_{2,j}$};
  \node [vertex,dashed] (v2j) at (8,0){$v_{i_2}$};

  \node [simple] (d2j) at (3.5,-1){$d_{2,j}$};
  \node [simple] (c2j) at (3.5,1){$c_{2,j}$};
  \node [simple] (b2j) at (6.5,-1){$b_{2,j}$};
  \node [simple] (a2j) at (6.5,1){$a_{2,j}$};

  \node [simple] (p2E1) at (2,-2){$p^E_1$};
  \node [simple,dashed] (p2V1) at (3.5,-3){$p^V_1$};
  \node [simple,dashed] (p2V2) at (6.5,-3){$p^V_2$};

  \node [simple,dashed] (p2V22) at (3.5,3){$p^V_2$};
  \node [simple] (p2E2) at (5,2){$p^E_2$};
  \node [simple,dashed] (p2V3) at (6.5,3){$p^V_3$};

  \node [simple] (p2E3) at (9.5,0){$p^E_3$};

  \draw (ej)--(vpp2j)--(p2E1) (vpp2j)--(d2j)--(p2V1) (vpp2j)--(c2j)--(p2V22) (c2j)--(vp2j)--(d2j) (vp2j)--(p2E2) (vp2j)--(b2j)--(v2j)--(p2E3) (vp2j)--(a2j)--(v2j) (b2j)--(p2V2) (a2j)--(p2V3);

  \node [vertex,dashed] (v3j) at (0,2){$v_{i_3}$};
  \node [simple] (p3E3) at (0,4){$p^E_3$};

  \draw (ej)--(v3j)--(p3E3);

\end{tikzpicture}
\caption{Gadget for hyperedge $e_j = \{v_{i_1},v_{i_2},v_{i_3}\}$.}
\label{fig:edgegadget}
\end{center}
\end{figure}

We need to prove that $G$ is a bipartite graph, that $C$ has the desired properties, and that $f$ has a retraction to $C$ if and only if $H$ has an appropriate 2-coloring. We first prove the latter. So, let  $f$ be a retraction to $C$; we prove that $f$ restricted to $V$ is a 2-coloring, using colors $p^V_1,p^V_2$, such that no hyperedge is monochromatic. Suppose otherwise and let $e_j = \{v_{i_1},v_{i_2},v_{i_3}\}$ be such that $f(\{v_{i_1},v_{i_2},v_{i_3}\})$ is monochromatic. Because every vertex in $V$ is adjacent to $p^E_3$, we get that $f(\{v_{i_1},v_{i_2},v_{i_3}\})\in \{p^V_1,p^V_2\} = N_C(p^E_3)$. First, suppose that $f(\{v_{i_1},v_{i_2},v_{i_3}\}) = p^V_1$. We prove that $f$ must be as depicted in Figure~\ref{fig:coloring1}, a contradiction since in this case $e_j$ has neighbors in $f^{-1}(p^E_i)$  for every $i\in [3]$, and therefore cannot be mapped to~$C$. Labels of vertices that do not have their image forced are left blank.
Note that $C$ can also be seen as the complete bipartite graph minus the perfect matching $\{p^V_ip^E_i\mid i\in[3]\}$.
Because $a_{1,j}$ is adjacent to $p^V_3$ and to $v_{i_1}\in f^{-1}(p^V_1)$, we get that $f(a_{1,j}) = p^E_2$. But now we get that $v'_{1,j}$ is adjacent to $p_1^E$ and to $f^{-1}(p^E_2)$, and therefore must be in $p^V_3$. This implies that $d_{1,j}$ is adjacent to $p^V_2$ and $f^{-1}(p^V_3)$, and hence is colored with $p^E_1$. Finally, we get that $v''_{1,j}$ is adjacent to $p^E_2$ and $f^{-1}(p^E_1)$, and must be colored with $p^V_3$. The analysis for the right-hand side of the gadget is similar. For the case where $f(\{v_{i_1},v_{i_2},v_{i_3}\}) = p^V_2$, we have the situation depicted in Figure~\ref{fig:coloring2}, and it follows similarly. Therefore, no edge is monochromatic, as we wanted to prove.

\begin{figure}[thb]
\begin{center}
  \begin{tikzpicture}[scale=.65]
  \pgfsetlinewidth{1pt}
  \tikzset{vertex/.style={circle, minimum size=0.7cm, draw, inner sep=1pt}}
  \tikzset{simple/.style={circle, minimum size=0.3cm, draw, inner sep=1pt}}

  \node [vertex] (ej) at (0,0){$e_j$};
  \node [vertex,label={[xshift=-.2cm]-30:$v''_{1,j}$}] (vpp1j) at (-2,0){$p^V_3$};
  \node [vertex,label = -90:$v'_{1,j}$] (vp1j) at (-5,0){$p^V_3$};
  \node [vertex,label=-90:$v_{i_1}$,line width=2pt] (v1j) at (-8,0){$p^V_1$};

  \node [vertex,label={[yshift=+.2cm,xshift=-0.2cm]-30:$d_{1,j}$}] (d1j) at (-3.5,-1){$p^E_1$};
  \node [simple] (c1j) at (-3.5,1){};
  \node [simple] (b1j) at (-6.5,-1){};
  \node [vertex,label={[xshift=+.1cm]120:$a_{1,j}$}] (a1j) at (-6.5,1){$p^E_2$};

  \node [vertex] (p1E2) at (-2,2){$p^E_2$};
  \node [vertex] (p1V2) at (-3.5,-3){$p^V_2$};
  \node [vertex] (p1V1) at (-6.5,-3){$p^V_1$};

  \node [vertex] (p1V12) at (-3.5,3){$p^V_1$};
  \node [vertex] (p1E1) at (-5,2){$p^E_1$};
  \node [vertex] (p1V3) at (-6.5,3){$p^V_3$};

  \node [vertex] (p1E3) at (-9.5,0){$p^E_3$};

  \draw (ej)--(vpp1j)--(p1E2) (vpp1j)--(d1j)--(p1V2) (vpp1j)--(c1j)--(p1V12) (c1j)--(vp1j)--(d1j) (vp1j)--(p1E1) (vp1j)--(b1j)--(v1j)--(p1E3) (vp1j)--(a1j)--(v1j) (b1j)--(p1V1) (a1j)--(p1V3);

  \node [vertex,label={[xshift=+0.2cm]-120:$v''_{2,j}$}] (vpp2j) at (2,0){$p^V_2$};
  \node [vertex,label=-90:$v'_{2,j}$] (vp2j) at (5,0){$p^V_1$};
  \node [vertex,label=90:$v_{i_2}$,line width=2pt] (v2j) at (8,0){$p^V_1$};

  \node [simple] (d2j) at (3.5,-1){};
  \node [vertex,label=150:$c_{2,j}$] (c2j) at (3.5,1){$p^E_3$};
  \node [vertex,label=-30:$b_{2,j}$] (b2j) at (6.5,-1){$p^E_3$};
  \node [simple] (a2j) at (6.5,1){};

  \node [vertex] (p2E1) at (2,-2.5){$p^E_1$};
  \node [vertex] (p2V1) at (3.5,-3){$p^V_1$};
  \node [vertex] (p2V2) at (6.5,-3){$p^V_2$};

  \node [vertex] (p2V22) at (3.5,3){$p^V_2$};
  \node [vertex] (p2E2) at (5,2){$p^E_2$};
  \node [vertex] (p2V3) at (6.5,3){$p^V_3$};

  \node [vertex] (p2E3) at (9.5,0){$p^E_3$};

  \draw (ej)--(vpp2j)--(p2E1) (vpp2j)--(d2j)--(p2V1) (vpp2j)--(c2j)--(p2V22) (c2j)--(vp2j)--(d2j) (vp2j)--(p2E2) (vp2j)--(b2j)--(v2j)--(p2E3) (vp2j)--(a2j)--(v2j) (b2j)--(p2V2) (a2j)--(p2V3);

  \node [vertex,label=30:$v_{i_3}$,line width=2pt] (v3j) at (0,2){$p^V_1$};
  \node [vertex] (p3E3) at (0,4){$p^E_3$};

  \draw (ej)--(v3j)--(p3E3);
\end{tikzpicture}
\caption{Coloring of a hyperedge gadget when $f(\{v_{i_1},v_{i_2},v_{i_3}\}) = p^V_1$. Vertices $v_{i_1}$, $v_{i_2}$, and $v_{i_3}$ are emphasized. The image of a vertex is put inside of the node, and its label appears next to it, with the exception of $e_j$. Note that the blank vertices can be colored, from left to right, with $p^E_2$, $p^E_2$, $p^E_3$, and $p^E_2$.}
\label{fig:coloring1}
\end{center}
\end{figure}
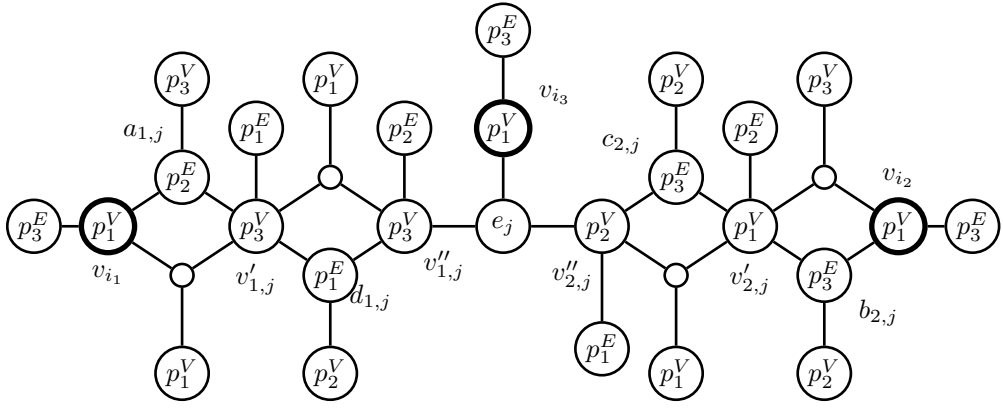

\begin{figure}[thb]
\begin{center}
  \begin{tikzpicture}[scale=.7]
  \pgfsetlinewidth{1pt}
  \tikzset{vertex/.style={circle, minimum size=0.7cm, draw, inner sep=1pt}}
  \tikzset{simple/.style={circle, minimum size=0.3cm, draw, inner sep=1pt}}

  \node [vertex] (ej) at (0,0){$e_j$};
  \node [vertex,label={[xshift=-.2cm]-30:$v''_{1,j}$}] (vpp1j) at (-2,0){$p^V_1$};
  \node [vertex,label = -90:$v'_{1,j}$] (vp1j) at (-5,0){$p^V_2$};
  \node [vertex,label=90:$v_{i_1}$,line width=2pt] (v1j) at (-8,0){$p^V_2$};

  \node [simple] (d1j) at (-3.5,-1){};
  \node [vertex,label={[yshift=+.2cm,xshift=-0.1cm]0:$c_{1,j}$}] (c1j) at (-3.5,1){$p^E_3$};
  \node [vertex,label={[xshift=+.1cm,yshift=+.2cm]-120:$b_{1,j}$}] (b1j) at (-6.5,-1){$p^E_3$};
  \node [simple] (a1j) at (-6.5,1){};

  \node [vertex] (p1E2) at (-2,-2){$p^E_2$};
  \node [vertex] (p1V2) at (-3.5,-3){$p^V_2$};
  \node [vertex] (p1V1) at (-6.5,-3){$p^V_1$};

  \node [vertex] (p1V12) at (-3.5,3){$p^V_1$};
  \node [vertex] (p1E1) at (-5,2){$p^E_1$};
  \node [vertex] (p1V3) at (-6.5,3){$p^V_3$};

  \node [vertex] (p1E3) at (-9.5,0){$p^E_3$};

  \draw (ej)--(vpp1j)--(p1E2) (vpp1j)--(d1j)--(p1V2) (vpp1j)--(c1j)--(p1V12) (c1j)--(vp1j)--(d1j) (vp1j)--(p1E1) (vp1j)--(b1j)--(v1j)--(p1E3) (vp1j)--(a1j)--(v1j) (b1j)--(p1V1) (a1j)--(p1V3);

  \node [vertex,label={[xshift=+0.2cm]-120:$v''_{2,j}$}] (vpp2j) at (2,0){$p^V_3$};
  \node [vertex,label=-90:$v'_{2,j}$] (vp2j) at (5,0){$p^V_3$};
  \node [vertex,label=-90:$v_{i_2}$,line width=2pt] (v2j) at (8,0){$p^V_2$};

  \node [vertex,label={[xshift=+.1cm,yshift=+.2cm]-120:$d_{2,j}$}] (d2j) at (3.5,-1){$p^E_2$};
  \node [simple] (c2j) at (3.5,1){};
  \node [simple] (b2j) at (6.5,-1){};
  \node [vertex,label={[xshift=-.1cm,yshift=-.1cm]30:$a_{2,j}$}] (a2j) at (6.5,1){$p^E_1$};

  \node [vertex] (p2E1) at (2,2.5){$p^E_1$};
  \node [vertex] (p2V1) at (3.5,-3){$p^V_1$};
  \node [vertex] (p2V2) at (6.5,-3){$p^V_2$};

  \node [vertex] (p2V22) at (3.5,3){$p^V_2$};
  \node [vertex] (p2E2) at (5,2){$p^E_2$};
  \node [vertex] (p2V3) at (6.5,3){$p^V_3$};

  \node [vertex] (p2E3) at (9.5,0){$p^E_3$};

  \draw (ej)--(vpp2j)--(p2E1) (vpp2j)--(d2j)--(p2V1) (vpp2j)--(c2j)--(p2V22) (c2j)--(vp2j)--(d2j) (vp2j)--(p2E2) (vp2j)--(b2j)--(v2j)--(p2E3) (vp2j)--(a2j)--(v2j) (b2j)--(p2V2) (a2j)--(p2V3);

  \node [vertex,label={[yshift=-.1cm]120:$v_{i_3}$},line width=2pt] (v3j) at (0,2){$p^V_2$};
  \node [vertex] (p3E3) at (0,4){$p^E_3$};

  \draw (ej)--(v3j)--(p3E3);
\end{tikzpicture}
\caption{Coloring of a hyperedge gadget when $f(\{v_{i_1},v_{i_2},v_{i_3}\}) = p^V_2$. Vertices $v_{i_1}$, $v_{i_2}$, and $v_{i_3}$ are emphasized. The image of a vertex is put inside of the node, and its label appears next to it, with the exception of $e_j$. Note that the blank vertices can be colored, from left to right, with $p^E_1$, $p^E_3$, $p^E_1$, and $p^E_1$.}
\label{fig:coloring2}
\end{center}
\end{figure}

Conversely, suppose now that $f'$ is a 2-coloring of $H$ with no monochromatic hyperedge, and let $f$ be obtained from $f'$ by mapping to $p^V_i$ all the vertices colored with~$i$, for $i\in \{1,2\}$. Let $e_j = \{v_{i_1},v_{i_2},v_{i_3}\}$. We show how to map the vertices within the hyperedge gadget related to $e_j$. The possibilities are the following:

\begin{itemize}
\item[$\bullet$] $f(v_{i_1}) = f(v_{i_2}) = p^V_1$: in this case $f(v_{i_3})=p^V_2$ since $e_j$ is not monochromatic. Map the vertices as in Figure~\ref{fig:coloring1}, except for $v_{i_3}$, and note that we can map $e_j$ to $p^E_1$, and that the blank vertices can be mapped to $C$;

\item[$\bullet$] $f(v_{i_1}) = f(v_{i_2}) = p^V_2$: in this case $f(v_{i_3})=p^V_1$ since $e_j$ is not monochromatic. Map the vertices as in Figure~\ref{fig:coloring2}, except for $v_{i_3}$, and note that $e_j$ can be mapped to $p^V_2$, and that the blank vertices can be mapped to $C$;

\item[$\bullet$]  $f(v_{i_1}) = p^V_1$ and $f(v_{i_2}) = p^V_2$: map the left-hand side as in Figure~\ref{fig:coloring1}, and the right-hand side as in Figure~\ref{fig:coloring2}. Note that $e_j$ can be mapped to $p\in \{p^E_1,p^E_2\}\setminus\{f(v_{i_3})\}$, and that the blank vertices can be mapped to $C$;

\item[$\bullet$]  $f(v_{i_1}) = p^V_2$ and $f(v_{i_2}) = p^V_1$: map the left-hand side as in Figure~\ref{fig:coloring2}, and the right-hand side as in Figure~\ref{fig:coloring1}. Note that $e_j$ can be mapped to $p^E_3$, and that the blank vertices can be mapped to $C$.
\end{itemize}

Finally, we need to prove that $G$ is bipartite, and that $C$ has the desired properties. To see that $G$ is bipartite, just observe that the dashed and solid vertices in Figure~\ref{fig:edgegadget} form a bipartition of $G$; let $(X,Y)$ be such a bipartition, where $V\subseteq X$ and $E\subseteq Y$ (i.e., $X$ contains the dashed vertices, and $Y$ the solid ones). More formally, one can see that
\[X = V\cup \{p^V_1,p^V_2,p^V_3\}\cup \{v'_{i,j},v''_{i,j}\mid i\in [2],j\in[m]\};\]
\[Y= E \cup \{p^E_1,p^E_2,p^E_3\}\cup \{a_{i,j},b_{i,j},c_{i,j},d_{i,j}\mid i\in [2],j\in[m]\}.\]

As for the properties of $C$, observe first that every $x\in X$ is adjacent to $p^E_i$, for some $i\in \{1,2,3\}$, i.e., $Y_C = \{p^E_1,p^E_2,p^E_3\}$ dominates $X$. It remains to prove that every $p\in Y_C$ is at distance at most~2 from every $y\in Y$. Given $e_j\in E$, denote by $Y_j$ the subset of vertices in $Y$ contained in a gadget related to $e_j$; hence $Y = Y_C\cup \bigcup_{j\in [m]} Y_j$ and it suffices to prove that, given some $j\in [m]$, every $p\in Y_C$ is at distance at most~2 from every vertex in $Y_j$. For $p^E_1$, it follows from the fact that $Y_j\subseteq N(\{v'_{1,j},v''_{2,j},p^V_2,p^V_3\})$ and that $\{v'_{1,j},v''_{2,j},p^V_2,p^V_3\}\subseteq N(p^E_1)$. For $p^E_2$, if follows from the fact that $Y_j\subseteq N(\{v'_{2,j},v''_{1,j},p^V_1,p^V_3\})$ and that $\{v'_{2,j},v''_{1,j},p^V_1,p^V_3\}\subseteq N(p^E_2)$. Finally, for $p^E_3$ if follows from the fact that $Y_j\subseteq N(\{v_{i_1},v_{i_2},v_{i_3},p^V_1,p^V_2\})$ and that $\{v_{i_1},v_{i_2},v_{i_3},p^V_1,p^V_2\}\subseteq N(p^E_3)$.
\end{proof}

\begin{corollary}\label{cor:3PreExt}
$\kPre{3}|_{{\cal B}_4}$ is $\NP$-complete, even if every vertex in one of the parts is adjacent to some precolored vertex.
\end{corollary}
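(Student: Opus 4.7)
The plan is to reduce from the restricted retraction problem proved $\NP$-complete in Theorem~\ref{theo:C6retract}; membership in $\NP$ is clear. Starting from a bipartite graph $G=(X\cup Y,E)$ with induced $C_6$ equal to $(p^V_1,p^E_2,p^V_3,p^E_1,p^V_2,p^E_3)$ and satisfying the conclusions of that theorem, I would build an instance of $\kPre{3}$ on the same graph $G$ by using the partial $3$-coloring $\pi$ that sends ``opposite'' vertices of the hexagon to the same color, namely $\pi(p^V_i)=\pi(p^E_i)=i$ for $i\in\{1,2,3\}$. Since $C$ is isomorphic to $K_{3,3}$ minus the matching $\{p^V_ip^E_i\mid i\in[3]\}$, every edge $p^V_ip^E_j$ of $C$ satisfies $i\neq j$, so $\pi$ is indeed a proper partial $3$-coloring.

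Next, I would show that $G$ retracts to $C$ if and only if $\pi$ has a $3$-extension. One direction is immediate: composing a retraction $f\colon G\to C$ with the $3$-coloring $p^V_i,p^E_i\mapsto i$ used to define $\pi$ yields a $3$-extension of $\pi$, since $f$ fixes $V(C)$. For the converse, let $g$ be a $3$-extension of $\pi$. Since $G$ is connected and bipartite, any homomorphism to the bipartite graph $C$ must send $X$ into $\{p^V_1,p^V_2,p^V_3\}$ and $Y$ into $\{p^E_1,p^E_2,p^E_3\}$. The key observation is that among the precolored vertices on each side of the bipartition every color appears exactly once, so setting $f(v)=p^V_{g(v)}$ for $v\in X$ and $f(v)=p^E_{g(v)}$ for $v\in Y$ is unambiguous and fixes $V(C)$; moreover, for any edge $uv\in E(G)$ with $u\in X$ and $v\in Y$, properness of $g$ gives $g(u)\neq g(v)$, so $f(u)f(v)=p^V_{g(u)}p^E_{g(v)}$ is an edge of $C$ by the characterization above.

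Finally, I would verify the diameter and domination claims. Using the guarantees of Theorem~\ref{theo:C6retract}, for any $y,y'\in Y$ both are within distance~$2$ of $p^E_1$, giving ${\sf dist}(y,y')\le 4$; for $x\in X$ and $y\in Y$, $x$ has a neighbor in $Y_C$ which is within distance~$2$ of $y$, giving ${\sf dist}(x,y)\le 3$; and for $x,x'\in X$, their respective neighbors in $Y_C$ are at distance at most~$2$ from each other already within $C$ itself, so ${\sf dist}(x,x')\le 4$. Hence $G\in\mathcal{B}_4\subseteq\mathcal{D}_4$. Moreover, the set of precolored vertices contains $Y_C$, which dominates $X$ by hypothesis, so every vertex of the part $X$ is adjacent to a precolored vertex, matching the last clause of the statement. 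The main subtlety of the argument is the backward direction of the equivalence, which relies crucially on the fact that on each part of the bipartition the three precolored vertices receive three distinct colors, allowing an arbitrary $3$-extension to be lifted unambiguously to a map into $V(C)$.
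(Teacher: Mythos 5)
Your proposal is correct and follows essentially the same route as the paper: the identical precoloring $p^V_i,p^E_i\mapsto i$ on the $C_6$ from Theorem~\ref{theo:C6retract}, the same equivalence between $3$-extensions and retractions (which the paper leaves as ``one can see'' and you rightly justify via the $K_{3,3}$-minus-perfect-matching structure of $C$), and the same diameter and domination arguments, differing only in the trivial routing used to bound distances between same-side pairs.
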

\begin{proof}
Given a bipartite graph $G = (X\cup Y, E)$ and a $C_6$, say $C$, in $G$ with the properties stated in Theorem~\ref{theo:C6retract}, it suffices to precolor $p^E_i,p^V_i$ with $i$ for each $i\in [3]$. One can see that $G$ has a 3-extension for this precoloring if and only if $G$ has a retraction to $C$. Also, the fact that $\{p^E_1,p^E_2,p^E_3\}$ dominates $X$ gives us the property claimed in the statement. It remains to verify that ${\sf diam}(G)\le 4$. For this, first we argue that ${\sf dist}(x,y)\le 3$ for every $x\in X$ and $y\in Y$; indeed, either $xy\in E(G)$ or $(x,p^E_i,w,y)$ is a path between $x$ and $y$, where $p^E_i$ is any vertex in $N(x)\cap C$ and $(p^E_i,w,y)$ is the $(p^E_i,y)$-path of length~2 ensured in the statement of Theorem~\ref{theo:C6retract}. Now, for $x,x'\in X$, let $y\in N(x)$ (it exists by construction); we know from the previous sentence that ${\sf dist}(y,x')\le 3$ and thus it follows that ${\sf dist}(x,x')\le 4$. The argument for $y,y'\in Y$ is symmetric.
\end{proof}

\section{Surjective \texorpdfstring{$C_6$}{C6}-homomorphism}\label{sec:C6Hom}

We first prove that \Comp\ is $\NP$-complete on ${\cal B}_4$, and as a byproduct we get that \textsc{Surjective $C_6$-homomorphism} is $\NP$-complete (cf. Corollary~\ref{cor:Comp_Hom}).
Notice that, unlike in the retraction problem, in the {\Comp} problem, the target $C_6$ is not necessarily a fixed subgraph of $G$.
How\-ever, we show that, under some assumptions, an edge surjective $C_6$-homomorphism for $G$ coincides with a retraction of $G$ to $H$ for some choice of $H \subseteq G$ such that $H\cong C_6$.
We remark that another proof that \Comp\ is $\NP$-complete was given by Vikas~\cite{V.17}.
However, our proof is simpler and we consider constraints which are not present in the proof of Vikas~\cite{V.17}.

In what follows, we present a reduction from \RetF{C_6} on bipartite graphs of diameter four to \Comp.
Let $G$ and $H$ be the input graph and subgraph, respectively; also, let us write $H$ as $(h_1,\ldots, h_6)$, and $X,Y$ be the parts of $G$, with $\{h_1,h_3,h_5\}\subseteq Y$.
We first introduce the gadget depicted in Figure~\ref{fig:h1h2gadget} related to a vertex $u\in Y \setminus V(H)$. The cycle $(h'_1,\ldots,h'_6)$ represents a circular permutation of $(h_1,\ldots,h_6)$
such that $h'_1\in \{h_1,h_3,h_5\}$; we call this the \emph{$(h'_1,h'_4)$-gadget} (this is because $h'_1h'_4$ is the diagonal related to this gadget). We obtain the input graph $G'$ of \textsc{Surjective $C_6$-homomorphism} from $G$ by adding an $(h'_1,h'_4)$-gadget related to each $u \in Y \setminus V(H)$, for each possible pair $(h'_1,h'_4)$ such that $h'_1\in \{h_1,h_3,h_5\}$, namely for $(h_1,h_4)$, $(h_3,h_6)$, and $(h_5,h_2)$ (see Figure~\ref{fig:Diagonals}).
Note that $V(G') \setminus V(G)$ consists of precisely the vertices $\{a,b,c,d,e,g\}$ introduced for each choice of $u \in Y \setminus V(H)$ and each choice of $h'_1$, for a total of $6(|Y| - 3)$ new vertices.
We use the circular permutation in the following lemma to avoid making analogous arguments for each type of gadget separately.

\begin{figure}[ht]
\centering
\scalebox{.9}{
  \begin{tikzpicture}[scale=.8]
  \pgfsetlinewidth{1pt}
  \tikzset{vertex/.style={circle, minimum size=0.2cm, fill=black, draw, inner sep=1pt}}
  \tikzset{wvertex/.style={circle, minimum size=0.2cm, fill=white, draw, inner sep=1pt}}

\foreach \i in {1,3,5}{
\node[vertex,label=210-60*\i:{$h'_\i$}] (h\i) at (-\i*60+210:3.5) {};
}
\foreach \i in {2,4,6}{
\node[wvertex,label=210-60*\i:{$h'_\i$}] (h\i) at (-\i*60+210:3.5) {};
}
\node[wvertex,label=below:{$a$}] (a1) at (165:2.2) {};
\node[wvertex,label=above:{$b$}] (a2) at (135:2.2) {};
\node[wvertex,label=above:{$c$}] (c) at (30:2.2) {};
\node[vertex,label={[above,xshift=.4cm] $d$}] (d1) at (-15:2.1) {};
\node[vertex,label=270:{$e$}] (d2) at (-45:2.2) {};
\node[wvertex,label=left:{$g$}] (e) at (-90:2.2) {};

\node[vertex,label=right:{\large$u$}] (v) at (0,0) {};

 \draw (h1)--(h2)--(h3)--(h4)--(h5)--(h6)--(h1)
        (v)--(a1) (v)--(a2) (v)--(e) (v)--(c)
        (h1)--(a1) (h1)--(a2)--(d1) (a1)--(d2)--(e)
        (h4)--(d1)--(c)--(h3) (h4)--(d2) (h5)--(e);

\end{tikzpicture}
}
\caption{Gadget related to the diagonal $h'_1h'_4$.}
\label{fig:h1h2gadget}
\end{figure}
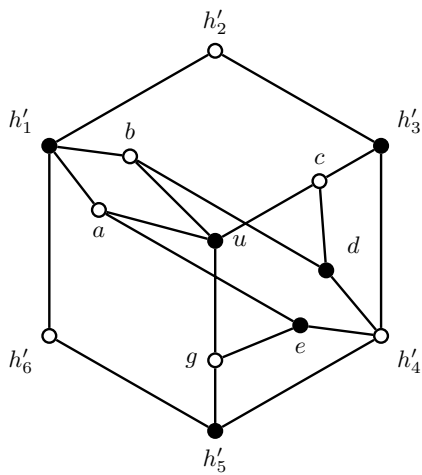

\begin{lemma}\label{lemma:diagonal}
    Let $G\in {\cal B}_4$ and $H\subseteq G$ be isomorphic to a $C_6$; write $H=(h_1,\ldots,h_6)$. Also, let $G'$ be obtained from $G$ by adding the gadget represented in Figure~\ref{fig:h1h2gadget} for every $u\in X$, and every $(h'_1,h'_4)\in \{(h_1,h_4),(h_3,h_6),(h_5,h_2)\}$. Let $f$ be an edge-surjective $C_6$-homomorphism of $G'$ to the $C_6$ $(1,\ldots,6)$. Suppose that $(h'_1,\ldots,h'_6)$ is a circular permutation of $H$ such that $h'_1\in \{h_1,h_3,h_5\}$, and $f(h'_i)=i$ for each $i\in\{3,4,5\}$. If some $u\in V(G)\setminus V(H)$ is such that $f(u) = 1$, then $f$ is a retraction of $G'$ to $H$.
\end{lemma}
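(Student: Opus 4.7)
The plan is to use the $(h'_1, h'_4)$-gadget attached to a vertex of $V(G)$ that $f$ sends to $1$ and propagate forced images around that gadget; the pattern exploits the fact that in the $6$-cycle any two vertices at cycle-distance exactly $2$ have a unique common neighbor, so whenever a vertex of $G'$ is adjacent to two already-determined vertices whose images are at cycle-distance $2$, its own image is determined.

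First I would fix the bipartite picture. Since $G'$ is bipartite with parts $(X,Y)$ and $f$ is a homomorphism to the bipartite cycle $(1,\ldots,6)$, whose sides are $\{1,3,5\}$ and $\{2,4,6\}$, and since $h'_3 \in X$ satisfies $f(h'_3)=3$, the class $X$ must be mapped entirely into $\{1,3,5\}$ and $Y$ into $\{2,4,6\}$. By hypothesis there is a vertex $v \in V(G)$ with $f(v)=1$, which therefore lies in $X$. Because the gadget is added for every vertex of $X$ and for each of the three diagonals $(h_1,h_4)$, $(h_3,h_6)$, $(h_5,h_2)$, and because the hypothesis $h'_1 \in \{h_1, h_3, h_5\}$ forces $(h'_1, h'_4)$ to be one of these diagonals, the $(h'_1, h'_4)$-gadget attached to $v$ really occurs in $G'$.

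Next I would run the propagation around that gadget, using the vertex names $a,b,c,d,e,g$ of Figure~\ref{fig:h1h2gadget}. Starting from $f(v)=1$, $f(h'_3)=3$, $f(h'_4)=4$, $f(h'_5)=5$, three successive rounds pin the remaining images: first $f(c) \in N(1)\cap N(3) = \{2\}$ and $f(g) \in N(1)\cap N(5) = \{6\}$; then $f(d) \in N(2)\cap N(4) = \{3\}$ and $f(e) \in N(6)\cap N(4) = \{5\}$; then $f(a) \in N(1)\cap N(5) = \{6\}$ and $f(b) \in N(1)\cap N(3) = \{2\}$; and finally $f(h'_1) \in N(6)\cap N(2) = \{1\}$. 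Once $f(h'_1) = 1$ is in hand, the remaining values $f(h'_2)$ and $f(h'_6)$ are forced analogously by the two $H$-neighbors of $h'_2$ and $h'_6$, giving $f(h'_2) = 2$ and $f(h'_6) = 6$. Under the identification $i \leftrightarrow h'_i$ of the image $C_6$ with $H$, this is exactly the statement that $f|_{V(H)}$ is the identity, i.e., that $f$ is an $H$-retraction of $G'$. There is no real obstacle in the argument; the one point worth double-checking is the match between the three admissible circular permutations and the three diagonals for which a gadget is actually attached to $v$, after which the propagation is mechanical and the hypothesis $V(G)\cap f^{-1}(1)\neq\emptyset$ is used only to locate the starting vertex of the chase.
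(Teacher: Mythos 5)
Your proposal is correct and takes essentially the same route as the paper's proof: the identical forced-image chase around the $(h'_1,h'_4)$-gadget attached to a vertex of $f^{-1}(1)\cap V(G)$, pinning $c,g$, then $d,e$, then $a,b$, and finally $f(h'_1)=1$, after which $f(h'_2)=2$ and $f(h'_6)=6$ are forced by $f(h'_3)=3$ and $f(h'_5)=5$. Your explicit bipartite-parity remark (the witness of $f^{-1}(1)$ lies in $X$, so the relevant gadget is indeed attached to it, the degenerate alternative being $v=h'_1$ itself) merely spells out a point the paper leaves implicit when it restricts attention to $u\in V(G)\setminus V(H)$.
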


\begin{proof}
We prove that if there exists a vertex $u\in V(G) \setminus V(H)$ such that $f(u)=1$, then $f(h'_1)=1$; note that the lemma follows since $\{1,3\} = \{f(h'_1),f(h'_3)\}\subseteq N(f(h'_2))$, and $\{1,5\} = \{f(h'_1),f(h'_5)\}\subseteq N(f(h'_2))$ imply that $f(h'_2) = 2$ and $f(h'_6) = 6$. To see that $f(h'_1)=1$, note that the images of the gadget related to $u$ must be as depicted in Figure~\ref{fig:lemma1}, as explained next (fixed values appear inside the vertex, while implied values appear between parenthesis next to the vertex). Since $g$ is adjacent to $u\in f^{-1}(1)$ and $h'_5\in f^{-1}(5)$, we get that $f(g) = 6$. Similarly, we get $f(c) = 2$ since it is adjacent to $u\in f^{-1}(1)$ and $h'_3\in f^{-1}(3)$. Note that this implies that $f(d) = 3$ and $f(e)=5$, which in turn implies that $f(a) = 6$ and $f(b) = 2$. Because $a,b\in N(h'_1)$, we get $f(h'_1)=1$ as we wanted to prove.
\begin{figure}[ht]
	\centering
	\scalebox{.9}{
  \begin{tikzpicture}[scale=.8]
  \pgfsetlinewidth{1pt}
  \tikzset{vertex/.style={circle, minimum size=0.2cm, fill=black, draw, inner sep=1pt}}

\node[vertex,fill=black!50,label=180:{$h'_1 (1)$}] (h1) at (150:3.5) {};
\foreach \i in {2,6}{
\node[vertex,label=210-60*\i:{$h'_\i$}] (h\i) at (-\i*60+210:3.5) {};
}
\foreach \i in {3,4,5}{
\node[vertex,label=210-60*\i:{$h'_\i$},fill=white] (h\i) at (-\i*60+210:3.5) {$\i$};
}

\node[vertex,label=below:{$a (6)$}] (a1) at (168:2.2) {};
\node[vertex,label=above:{$b (2)$}] (a2) at (135:2.2) {};
\node[vertex,label=above:{$c(2)$}] (c) at (30:2.2) {};
\node[vertex,label={[above,xshift=.4cm] $d (3)$}] (d1) at (-15:2.1) {};
\node[vertex,label=270:{$e (5)$}] (d2) at (-45:2.2) {};
\node[vertex,label=left:{$g (6)$}] (e) at (-90:2.2) {};

\node[vertex,label=right:{\large $u$},fill=white] (v) at (0,0) {$1$};

\draw (h1)--(h2)--(h3)--(h4)--(h5)--(h6)--(h1)
        (v)--(a1) (v)--(a2) (v)--(c) (v)--(e)
        (h1)--(a1) (h1)--(a2)--(d1)--(c) (a1)--(d2)--(e)
        (h3)--(c) (h4)--(d1) (h4)--(d2) (h5)--(e);

\end{tikzpicture}
}
	\caption{Mapping of a diagonal gadget related to $u\in X$ such that $f(u)=1$, when $f(h'_i) =i $ for each $i\in\{3,4,5\}$. We get that $f(h'_1)=1$.}
	\label{fig:lemma1}
\end{figure}
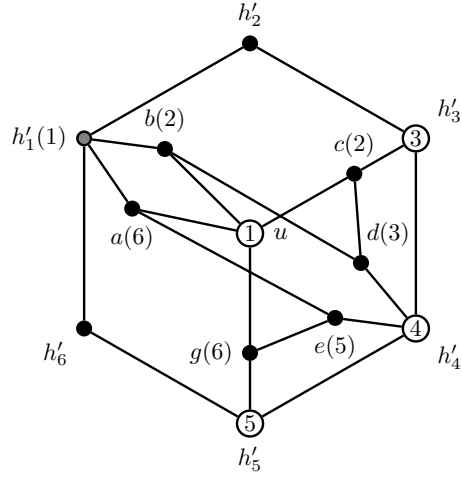
\end{proof}

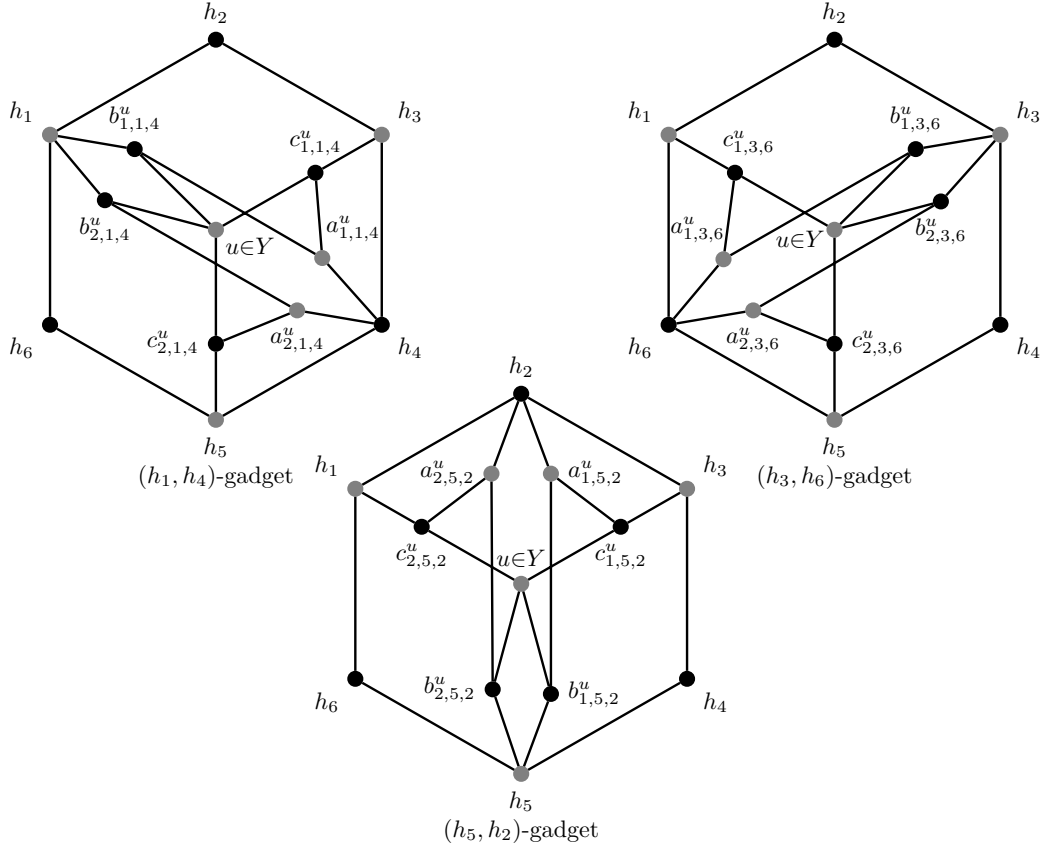
\begin{figure}[ht]
\noindent\begin{minipage}{\textwidth}
\begin{minipage}{0.49\textwidth}
\begin{flushleft}
\scalebox{.9}{
  \begin{tikzpicture}[scale=0.7]
  \pgfsetlinewidth{1pt}
  \tikzset{vertex/.style={circle, minimum size=0.2cm, fill=black, draw, inner sep=1pt}}

\foreach \i in {2,4,6}{
\node[vertex,label=210-60*\i:{$h_\i$}] (h\i) at (-\i*60+210:4) {};
}
\foreach \i in {1,3,5}{
\node[vertex,label=210-60*\i:{$h_\i$},black!50] (h\i) at (-\i*60+210:4) {};
}

\node[vertex,label=below:{$b^u_{2,1,4}$}] (a2) at (165:2.4) {};
\node[vertex,label=above:{$b^u_{1,1,4}$}] (a1) at (135:2.4) {};
\node[vertex,label=above:{$c^u_{1,1,4}$}] (c) at (30:2.4) {};
\node[vertex,label={[above,xshift=.45cm]$a^u_{1,1,4}$},black!50] (d1) at (-15:2.3) {};
\node[vertex,label=270:{$a^u_{2,1,4}$},black!50] (d2) at (-45:2.4) {};
\node[vertex,label=left:{$c^u_{2,1,4}$}] (e) at (-90:2.4) {};

\node[vertex,label={[below, xshift=.5cm, yshift=-.1cm]$u{\in}Y$},black!50] (v) at (0,0) {};

\draw (h1)--(h2)--(h3)--(h4)--(h5)--(h6)--(h1)
        (v)--(a1) (v)--(a2) (v)--(c) (v)--(e)
        (h1)--(a1) (h1)--(a2)--(d2)--(e) (a1)--(d1)--(c)
        (h3)--(c) (h4)--(d1) (h4)--(d2) (h5)--(e);

\node at (0,-5.2) {$(h_1,h_4)$-gadget};

\end{tikzpicture}
}
\end{flushleft}
\end{minipage}\hfill\begin{minipage}{0.49\textwidth}

\begin{flushright}
\scalebox{.9}{
  \begin{tikzpicture}[scale=0.7]
  \pgfsetlinewidth{1pt}
  \tikzset{vertex/.style={circle, minimum size=0.2cm, fill=black, draw, inner sep=1pt}}

\foreach \i in {2,4,6}{
\node[vertex,label=210-60*\i:{$h_\i$}] (h\i) at (-\i*60+210:4) {};
}
\foreach \i in {1,3,5}{
\node[vertex,label=210-60*\i:{$h_\i$},black!50] (h\i) at (-\i*60+210:4) {};
}

\node[vertex,label={[above,xshift=-.38cm]$a^u_{1,3,6}$},black!50] (a1) at (-165:2.4) {};
\node[vertex,label=below:{$a^u_{2,3,6}$},black!50] (a2) at (-135:2.4) {};
\node[vertex,label=above:{$b^u_{1,3,6}$}] (b1) at (45:2.4) {};
\node[vertex,label=below:{$b^u_{2,3,6}$}] (b2) at (15:2.3) {};
\node[vertex,label={[above,xshift=.2cm]$c^u_{1,3,6}$}] (c1) at (150:2.4) {};
\node[vertex,label=0:{$c^u_{2,3,6}$}] (c2) at (-90:2.4) {};

\node[vertex,label={[below, xshift=-.5cm]$u{\in}Y$},black!50] (v) at (0,0) {};

\draw (h1)--(h2)--(h3)--(h4)--(h5)--(h6)--(h1)
        (v)--(b1) (v)--(b2) (v)--(c1) (v)--(c2)
        (b2)--(h3)--(b1)--(a1)--(h6)--(a2)--(b2)
        (a1)--(c1)--(h1) (a2)--(c2)--(h5);

\node at (0,-5.2) {$(h_3,h_6)$-gadget};

\end{tikzpicture}
}
\end{flushright}
\end{minipage}
\end{minipage}


\noindent\begin{minipage}{\textwidth}
\begin{center}
\vspace{-2cm}
\scalebox{.9}{
  \begin{tikzpicture}[scale=0.7]
  \pgfsetlinewidth{1pt}
  \tikzset{vertex/.style={circle, minimum size=0.2cm, fill=black, draw, inner sep=1pt}}

\foreach \i in {2,4,6}{
\node[vertex,label=210-60*\i:{$h_\i$}] (h\i) at (-\i*60+210:4) {};
}
\foreach \i in {1,3,5}{
\node[vertex,label=210-60*\i:{$h_\i$},black!50] (h\i) at (-\i*60+210:4) {};
}

\node[vertex,label=right:{$a^u_{1,5,2}$},black!50] (a1) at (75:2.4) {};
\node[vertex,label=left:{$a^u_{2,5,2}$},black!50] (a2) at (105:2.4) {};
\node[vertex,label=right:{$b^u_{1,5,2}$}] (b1) at (-75:2.4) {};
\node[vertex,label=left:{$b^u_{2,5,2}$}] (b2) at (-105:2.3) {};
\node[vertex,label=below:{$c^u_{1,5,2}$}] (c1) at (30:2.4) {};
\node[vertex,label=below:{$c^u_{2,5,2}$}] (c2) at (150:2.4) {};

\node[vertex,label=above:{$u{\in}Y$},black!50] (v) at (0,0) {};

\draw (h1)--(h2)--(h3)--(h4)--(h5)--(h6)--(h1)
      (h2)--(a1)--(b1)--(h5)--(b2)--(a2)--(h2)
      (a1)--(c1)--(h3) (a2)--(c2)--(h1)
      (b2)--(v)--(b1) (c1)--(v)--(c2);

\node at (0,-5.2) {$(h_5,h_2)$-gadget};
\end{tikzpicture}
}
\end{center}
\end{minipage}
\caption{Different types of diagonal gadgets.}
\label{fig:Diagonals}
\end{figure}

Recall that Theorem~\ref{theo:C6retract} states that the restriction of \RetF{C_6} used in the following lemma is $\NP$-complete.

\begin{lemma}\label{lemma:C6contraction}
Let $G=(X\cup Y,E)$ be a bipartite graph and $H\subseteq G$ be a subgraph isomorphic to $C_6$. Let $Y_H$ be the set $V(H)\cap Y$, and suppose that $Y_H$ dominates $X$ and that ${\sf dist}(h,y)\le 2$ for every $h\in Y_H$ and $y\in Y$. If $G'$ is obtained as in Lemma~
\ref{lemma:diagonal}, then $G'$ is a bipartite graph with ${\sf diam}(G')\le 4$, and the answer to the \RetF{C_6} instance is \yes\ if and only if the answer to $G'$ admits an edge-surjective homomorphism to $C_6$.
\end{lemma}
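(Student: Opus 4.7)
The plan is to verify three separate claims about $G'$: (a) $G'$ is bipartite, (b) $\mathrm{diam}(G') \leq 4$, and (c) $G$ admits an $H$-retraction if and only if $G'$ admits a $C_6$-compaction.

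For (a) and (b), I would first extend the bipartition $(X, Y)$ of $G$ to $G'$ by placing the new gadget vertices on the appropriate sides: inspecting Figure~\ref{fig:h1h2gadget}, the vertices $d, e$ go into $X$ and $a, b, c, g$ into $Y$, and every gadget edge is then readily checked to cross the bipartition. For the diameter, my plan is to use the single structural fact that every vertex of $V(G')$ is at distance at most $2$ from every vertex of $X_H$ in $G'$. For original $X$-vertices this is the hypothesis; for original $Y$-vertices each is adjacent to some $X_H$-vertex by the domination hypothesis; for new $Y$-vertices $a, b, c, g$, each is directly adjacent to some $h'_i \in X_H$; and for new $X$-vertices $d, e$, a two-step detour through one of their $Y$-side neighbours in the gadget or through $h'_4 \in Y_H$ reaches every $h \in X_H$. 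A case analysis on whether the two endpoints lie on the $X$- or $Y$-side of the bipartition then gives $\mathrm{dist}_{G'}(v, w) \leq 4$ in every combination, the tightest being $Y$-side to $Y$-side which yields $1 + 2 + 1 = 4$.

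For the forward direction of (c), given an $H$-retraction $\phi : G \to H$, I would extend $\phi$ to $\psi : G' \to H$ by assigning images to the six new vertices of every diagonal gadget. For each gadget attached to some $u \in X \setminus V(H)$, the value $\phi(u)$ lies in $X_H = \{h'_1, h'_3, h'_5\}$, and a short case analysis on the three possibilities, together with the forced adjacencies of $a, b, c, d, e, g$ to $u$ and to specific vertices of $V(H)$, shows that consistent images in $V(H)$ always exist. The resulting $\psi$ fixes $V(H)$ pointwise, and since $H \subseteq \psi(G')$ every edge of $H$ is covered by the image, so $\psi$ is a $C_6$-compaction of $G'$.

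The main obstacle is the reverse direction. Suppose $f : G' \to C_6$ is a $C_6$-compaction onto the abstract cycle $(1, \ldots, 6)$. My plan is to bring $f$ into the hypotheses of Lemma~\ref{lemma:diagonal} by arguing that $f|_H$ is a bijective homomorphism $H \to C_6$ rather than a folding of $H$ onto a proper subgraph of the target. The crux is that any non-bijective $f|_H$ leaves some edges of the target $C_6$ uncovered by $f(V(H))$, while the only remaining edges of $G'$ lie inside the three diagonal gadgets; the adjacency of each new vertex to several vertices of $V(H)$, combined with a folded image of $V(H)$, pins its image to a small set that cannot supply the missing edges, contradicting edge-surjectivity. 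Once $f|_H$ is established to be bijective, a suitable automorphism of the target $C_6$ relabels the image so that $f(h_i) = i$ for every $i \in [6]$; since $h_1 \in V(G) \cap f^{-1}(1)$, Lemma~\ref{lemma:diagonal} applies to any circular permutation with $h'_1 \in X_H$ and forces $f$ to be an $H$-retraction of $G'$. Restricting $f$ to $V(G)$ yields the desired $H$-retraction of $G$, completing the equivalence.
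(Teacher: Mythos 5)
Your parts (a), (b) and the forward direction of (c) track the paper's proof (one slip: the ``single structural fact'' that every vertex of $V(G')$ is within distance~$2$ of \emph{every} vertex of $X_H$ is impossible for $Y$-side vertices by parity; the facts you actually invoke afterwards --- each $Y$-side vertex adjacent to \emph{some} vertex of $X_H$, each $X$-side vertex within distance~$2$ of every vertex of $X_H$ --- are the correct ones and do give the $1+2+1$ bound). The genuine gap is in the reverse direction, and it starts from a false premise: it is \emph{not} true that ``the only remaining edges of $G'$ lie inside the three diagonal gadgets.'' $G'$ contains all of $E(G)$, and $G$ in general has many edges outside $H$ (in the intended application, all the incidences and edge-gadget edges from Theorem~\ref{theo:C6retract}). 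So a folded $f|_H$ cannot be refuted by inspecting gadget edges alone: an original edge of $G$ could a priori cover a missing target edge. Ruling that out is precisely the job of Lemma~\ref{lemma:diagonal}: after arranging $f(h'_i)=i$ for $i\in\{3,4,5\}$, if \emph{any} vertex of $V(G)$ maps to the missing class~$1$, the lemma forces $f(h'_1)=1$ and the fold disappears. Your outline instead invokes Lemma~\ref{lemma:diagonal} only \emph{after} bijectivity of $f|_H$ has been established, where it does no work --- once $f(h_i)=i$ for all $i$, $f$ is already an $H$-retraction --- so the one tool designed for this step is deployed where it is vacuous and absent where it is needed.

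Concretely, two configurations your sketch does not handle. First, $f$ may collapse all of $X_H$ to a single vertex, so $f(V(H))\subseteq\{6,1,2\}$; gadget adjacencies say nothing here, and the paper kills this case with the domination hypothesis (which your reverse direction never uses): every vertex of $Y'$ is dominated by $X_H$, hence maps into $\{2,6\}$, and by parity $f^{-1}(4)\subseteq Y'$, so $f^{-1}(4)=\emptyset$, contradicting surjectivity. Second, the hardest fold, $f(h_i)=i$ for $i\in\{3,4,5,6\}$ but $f(h_1)=5$, $f(h_2)=4$: here Lemma~\ref{lemma:diagonal} only yields $f^{-1}(1)\cap V(G)=\emptyset$, so the preimage of~$1$ must lie among the gadget $a$-vertices; since these are dominated by $\{h_2,h_4,h_6\}$ and $f(h_2)=f(h_4)=4$, only an $(h_3,h_6)$-gadget could host such a vertex, and one must then run the forcing inside that gadget ($f(c^u_{1,3,6})=6$ and $f(b^u_{1,3,6})=2$, whence $\{2,6\}\subseteq f(N(u))$ forces $f(u)=1$ for the \emph{original} vertex $u$, contradicting $f^{-1}(1)\cap V(G)=\emptyset$). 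Note that the contradiction bounces back through $V(G)$ via Lemma~\ref{lemma:diagonal}, rather than through gadget-internal edge counting as your ``pins its image to a small set'' suggests. As written, this chain --- the case split on $|f(X_H)|\in\{1,2,3\}$, the relabelings, and the gadget forcing --- is missing rather than merely compressed, so the reverse direction does not go through.
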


\begin{proof}
Write $H$ as before and suppose $Y_H=\{h_1,h_3,h_5\}$. Denote by $A,B,C$ the sets of vertices containing the vertices labeled with $a,b,c$ in the gadgets depicted in Figure~\ref{fig:Diagonals}, respectively. To see that $G'$ is bipartite, observe the coloring in grey and black in Figure~\ref{fig:Diagonals}, with $Y$ being gray. Now, denote by $(X',Y')$ the bipartition of $G'$ such that $X\subseteq X'$, and observe that $X' = X\cup B\cup C$ and $Y' = Y\cup A$. We prove that the same property holding for $Y_H$ in $G$ also holds in $G'$. The fact that ${\sf diam}(G')\le 4$ then follows by the same argument given in Corollary~\ref{cor:3PreExt}. We discuss each property separately:
\begin{enumerate}
    \item\label{i} $Y_H$ dominates $X'$: we know that $Y_H$ dominates $X$ by assumption. Also, one can verify in Figure~\ref{fig:Diagonals} that every vertex in $B\cup C$ is adjacent to some vertex in $\{h_1,h_3,h_5\}$;
    \item\label{ii} ${\sf dist}(h,y)\le 2$ for every $y\in Y'$ and $h\in Y_H$: we know that this holds when $y\in Y$ by assumption. Consider a vertex $a\in A$. It suffices to show that this holds when $a$ is within an $(h_1,h_4)$-gadget, since the other cases are symmetric. So let $u$ be such that $a$ is within the $(h_1,h_4)$-gadget related to $u$. If $a = a_{1,1,4}^u$, then $(a,b_{1,1,4}^u,h_1)$, $(a,c_{1,1,4}^u,h_3)$, and $(a,h_4,h_5))$ are paths of length 2 between $a$ and each $h\in \{h_1,h_3,h_5\}$, as we wanted to show. An analogous argument holds if $a = a_{2,1,4}^u$.
\end{enumerate}

Now, we prove the second part of the theorem. Let $f$ be a retraction of $G$ to $H$. Since $H$ is isomorphic to $C_6$, it suffices to extend $f$ to $G'$. Also, because the gadgets are symmetric, we just need to show how to extend $f$ to an $(h_1,h_4)$-gadget. So consider any $u\in Y$. Figure~\ref{fig:lemma1} tells us how to extend $f$ to the $(h_1,h_4)$-gadget related to $u$ when $f(u)=h_1$ (considering $h'_1=h_1$), while Figure~\ref{fig:extendingDiagGadget} shows how to do it when $f(u)\in \{h_3,h_5\}$. Because $u\in Y$ and $\{h_2,h_4,h_6\}\subseteq X$, these are the only options.

\begin{figure}[t]
\noindent\begin{minipage}{\textwidth}
\begin{minipage}{0.49\textwidth}
\begin{center}
\scalebox{.8}{
  \begin{tikzpicture}[scale=.7]
  \pgfsetlinewidth{1pt}
  \tikzset{vertex/.style={circle, minimum size=0.2cm, fill=black, draw, inner sep=1pt}}

\node[vertex,label=135:{$h_1$}] (h1) at (150:4) {};
\foreach \i in {2,6}{
\node[vertex,label=210-60*\i:{$h_\i$}] (h\i) at (-\i*60+210:4) {};
}
\foreach \i in {3,4,5}{
\node[vertex,label=210-60*\i:{$h_\i$}] (h\i) at (-\i*60+210:4) {};
}

\node[vertex,label=below:{$(h_2)$}] (a1) at (165:2.4) {};
\node[vertex,label=above:{$(h_2)$}] (a2) at (135:2.4) {};
\node[vertex,label={[above,xshift=-.2cm]$(h_2/h_4)$}] (c) at (30:2.4) {};
\node[vertex,label={[above, xshift = .4cm]$(h_3)$}] (d1) at (-15:2.3) {};
\node[vertex,label=270:{$(h_3)$}] (d2) at (-45:2.4) {};
\node[vertex,label=left:{$(h_4)$}] (e) at (-90:2.4) {};

\node[vertex,label=right:{$u$},fill=white] (v) at (0,0) {\small $h_3$};

\draw (h1)--(h2)--(h3)--(h4)--(h5)--(h6)--(h1)
        (v)--(a1) (v)--(a2) (v)--(c) (v)--(e)
        (h1)--(a1) (h1)--(a2)--(d1)--(c) (a1)--(d2)--(e)
        (h3)--(c) (h4)--(d1) (h4)--(d2) (h5)--(e);
\end{tikzpicture}
}
\end{center}
\label{fig:variable}
\end{minipage}\hfill\begin{minipage}{0.49\textwidth}
\begin{center}
\scalebox{.8}{
  \begin{tikzpicture}[scale=.7]
  \pgfsetlinewidth{1pt}
  \tikzset{vertex/.style={circle, minimum size=0.2cm, fill=black, draw, inner sep=1pt}}

\node[vertex,label=135:{$h_1$}] (h1) at (150:4) {};
\foreach \i in {2,6}{
\node[vertex,label=210-60*\i:{$h_\i$}] (h\i) at (-\i*60+210:4) {};
}
\foreach \i in {3,4,5}{
\node[vertex,label=210-60*\i:{$h_\i$}] (h\i) at (-\i*60+210:4) {};
}

\node[vertex,label=below:{$(h_6)$}] (a1) at (165:2.4) {};
\node[vertex,label=above:{$(h_6)$}] (a2) at (135:2.4) {};
\node[vertex,label=above:{$(h_4)$}] (c) at (30:2.4) {};
\node[vertex,label={[above, xshift=.4cm]$(h_5)$}] (d1) at (-15:2.3) {};
\node[vertex,label=270:{$(h_5)$}] (d2) at (-45:2.4) {};
\node[vertex,label=left:{$(h_4/h_6)$}] (e) at (-90:2.4) {};

\node[vertex,label=right:{$u$},fill=white] (v) at (0,0) {\small $h_5$};

\draw (h1)--(h2)--(h3)--(h4)--(h5)--(h6)--(h1)
        (v)--(a1) (v)--(a2) (v)--(c) (v)--(e)
        (h1)--(a1) (h1)--(a2)--(d1)--(c) (a1)--(d2)--(e)
        (h3)--(c) (h4)--(d1) (h4)--(d2) (h5)--(e);

\end{tikzpicture}
}
\end{center}
\label{fig:clause2}
\end{minipage}
\end{minipage}
    \caption{Extension of a retraction of $G$ to $H$ to a retraction of $G'$ to $H$, which is of course also an edge-surjective $C_6$-homomorphism of $G'$. }
    \label{fig:extendingDiagGadget}
\end{figure}
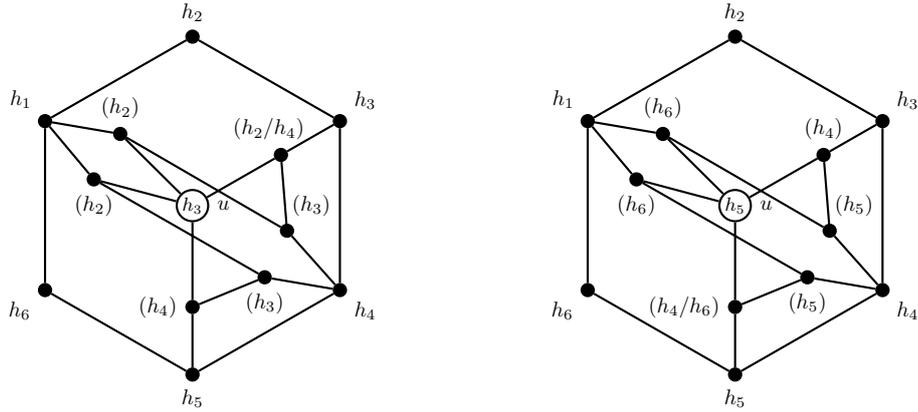

Conversely, suppose that $G'$ has an edge-surjective $C_6$-homomorphism $f$, and write the target $C_6$ as $H'=(1,\ldots,6)$. We want to prove that $f(h_i) = i$, for every $i\in [6]$, so that  $f$ also gives a retraction of $G$ to $H$. Note that if, at some point, we get that $f(h_i)=i$ either for every odd $i$, or for every even $i$, then we are done. 

First, we prove that $|f(Y_H)|>1$. So suppose without loss of generality that $f(Y_H) = 1$, and note that in this case $f(H)\subseteq \{1,2,6\}$. Because $G'$ is bipartite and $f$ is also vertex-surjective, we know that $f^{-1}(4)\neq\emptyset$, and since $f^{-1}(4)\subseteq X'$ we get a contradiction to Property (\ref{i}) of the set $Y_H$, since $X'$ would not be dominated by $Y_H$. We then may assume that $|f(Y_H)| = 2$, since the proof is finished when it is equal to~3. This means that 2 vertices among $\{h_1,h_3,h_5\}$ get distinct images. By relabeling $H'$ if necessary, we can assume that the possible cases are the following:

\begin{itemize}
    \item[$\bullet$] $f(h_3) = 3$ and $f(h_5) = 5$: this implies that $f(h_4)=4$. We know by Lemma~\ref{lemma:diagonal} that $f^{-1}(1)\cap Y=\emptyset$ as otherwise $f(h_1)=1$ and the lemma follows. Also, note that $\{h_2,h_4,h_6\}$ dominates $A$. Hence, since $\emptyset\neq f^{-1}(1)\subseteq A$, we must get that either $f(h_2)=2$ or $f(h_6)=6$. In fact, if we get that $f(h_2)=2$, then by relabeling $(1,\ldots,6)$ to $(3,4,5,6,1,2)$ and doing the same with $H$, we get the situation $f(h_i)=i$ for each $i\in \{3,4,5,6\}$. So suppose this is the case and note that we can assume that $f(h_1)=5$ and $f(h_2)=4$, since if $f(h_1)=1$ and $f(h_2)=2$ the proof is complete. We want to prove that $f^{-1}(1)=\emptyset$, thus getting a contradiction. Recall that $f^{-1}(1)\subseteq A$, and let $a\in A$ be such that $f(a)=1$. Also let $u\in Y$ be such that $a$ is within some diagonal gadget related to $u$. Observe that $a$ is not within an $(h_1,h_4)$-gadget since $\{a^u_{1,1,4},a^u_{2,1,4}\}\subseteq N(h_4)$, nor within an $(h_5,h_2)$-gadget since $\{a^u_{1,5,2},a^u_{2,5,2}\}\subseteq N(h_2)$, and $f(h_2)=f(h_4)=4$. Therefore, $a$ is within an $(h_3,h_6)$-gadget. The following argument is illustrated in Figure~\ref{fig:coloring3456}.

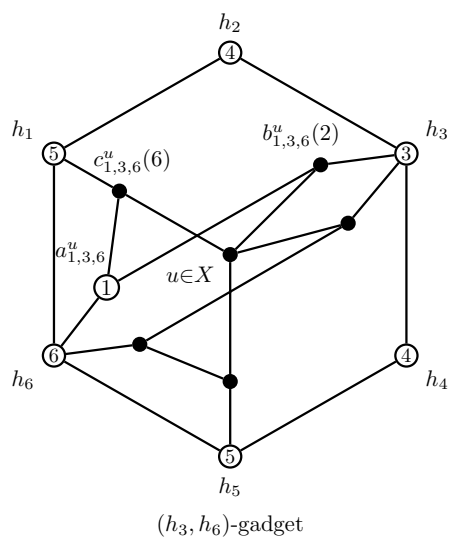
\begin{figure}[ht]
    \centering
    \scalebox{.85}{
    \begin{tikzpicture}[scale=.9]
\pgfsetlinewidth{1pt}
\tikzset{vertex/.style={circle, minimum size=0.2cm, fill=black, draw, inner sep=1pt}}

\foreach \i in {3,4,5,6}{
\node[vertex,scale=.9,label=210-60*\i:{$h_\i$},fill=white] (h\i) at (-\i*60+210:3.5) {$\i$};
}
\node[vertex,scale=.9,label=150:{$h_1$},fill=white] (h1) at (150:3.5) {$5$};
\node[vertex,scale=.9,label=90:{$h_2$},fill=white] (h2) at (90:3.5) {$4$};

\node[vertex,label={[above, xshift=-.4cm]$a^u_{1,3,6}$},fill=white] (a1) at (-165:2.2) {$1$};
\node[vertex,black] (a2) at (-135:2.2) {};
\node[vertex,label={[above,xshift=-.3cm]$b^u_{1,3,6}(2)$}] (b1) at (45:2.2) {};
\node[vertex] (b2) at (15:2.1) {};
\node[vertex,label={[above,xshift=.2cm]$c^u_{1,3,6} (6)$}] (c1) at (150:2.2) {};
\node[vertex] (c2) at (-90:2.2) {};

\node[vertex,label=210:{$u{\in}X$},black] (v) at (0,0) {};

\draw (h1)--(h2)--(h3)--(h4)--(h5)--(h6)--(h1)
        (v)--(b1) (v)--(b2) (v)--(c1) (v)--(c2)
        (b2)--(h3)--(b1)--(a1)--(h6)--(a2)--(b2)
        (a1)--(c1)--(h1) (a2)--(c2)--(h5);

\node at (0,-4.7) {$(h_3,h_6)$-gadget};

\end{tikzpicture}
    }%
    \caption{Situation where $f(h_i)=i$ for every $i\in \{3,4,5,6\}$.}
    \label{fig:coloring3456}
\end{figure}

        First suppose that $a = a^u_{1,3,6}$. Because $c^u_{1,3,6}\in N(h_1)$, $f(h_1)=5$, and $f(a)=1$, we get that $f(c^u_{1,3,6})=6$. And because $h_3\in N(b^u_{1,3,6})$, $f(h_3)=3$, and $f(a)=1$, we get that $f(b^u_{1,3,6})=2$.
    We get a contradiction since $\{2,6\}\subseteq f(N(u))$ and $f(u)\neq 1$. Clearly the same argument holds for $a=a^u_{2,3,6}$ since $f(h_5)=f(h_1)=5$.

    \item[$\bullet$] $f(h_1)=1$ and $f(h_3) = 3$: an argument analogous to the previous case yields a similar contradiction;
    \item[$\bullet$] $f(h_1) = 1$ and $f(h_5) = 5$: again, an analogous argument concludes the proof.
\end{itemize}
\vspace{-.45cm}
\end{proof}

As a direct consequence of Theorem~\ref{theo:C6retract} and Lemma~\ref{lemma:C6contraction}, we get that {\Comp} is $\NP$-complete even when $G$ has diameter four.  We next argue that this also implies the $\NP$-completeness of \textsc{Surjective $C_6$-Homomorphism}.

\begin{corollary}\label{cor:Comp_Hom}
{\Comp}$|_{{\cal B}_4}$ is $\NP$-complete. The same holds for \textsc{Surjective $C_6$-Homomorphism}$|_{{\cal B}_4}$.
\end{corollary}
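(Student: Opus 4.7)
The first claim follows immediately. Membership in NP is trivial, and NP-hardness is obtained by composing Theorem~\ref{theo:C6retract} with Lemma~\ref{lemma:C6contraction}: Theorem~\ref{theo:C6retract} outputs a bipartite graph $G$ with an induced $C_6$ subgraph $H$ satisfying the structural hypotheses demanded by Lemma~\ref{lemma:C6contraction}, and the latter produces in polynomial time a graph $G' \in {\cal B}_4$ for which \Comp\ has the same answer as \Ret{G}{H}.

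For the second claim, the plan is to use the \emph{same} reduction and argue that on the instance $G'$ constructed above, \textsc{Surjective $C_6$-Homomorphism} and \Comp\ have the same answer. The easy direction is that any $C_6$-compaction of $G'$ is automatically a surjective $C_6$-homomorphism, because $C_6$ has no isolated vertices and edge-surjectivity therefore forces vertex-surjectivity.

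For the converse direction, I would re-inspect the proof of Lemma~\ref{lemma:C6contraction}. The entire case analysis that begins with a $C_6$-compaction $f$ of $G'$ and derives $f(h_i)=i$ for every $i \in [6]$ (up to a relabeling of the target $C_6$) relies only on (a) the homomorphism property of $f$, (b) vertex-surjectivity of $f$, invoked for instance to guarantee $f^{-1}(4) \neq \emptyset$ and to deduce $\emptyset \neq f^{-1}(1) \subseteq A$, and (c) the conclusion of Lemma~\ref{lemma:diagonal}, whose own proof uses only the homomorphism property applied locally inside a single diagonal gadget. In particular, edge-surjectivity plays no role in any of these steps. Consequently, the same argument carries over verbatim to any surjective $C_6$-homomorphism $f$ of $G'$, yielding $f|_{V(H)} = \mathrm{id}_H$. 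Since $H$ is isomorphic to the target $C_6$, the identity on $V(H)$ already covers all six edges of the target, so $f$ is automatically edge-surjective and is therefore a $C_6$-compaction of $G'$.

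Combining the two directions, the reduction preserves the answer for both problems on the instances it produces, and together with the straightforward NP-membership (guess $f$ and check in polynomial time that every vertex of $C_6$ lies in its image and that $f$ preserves edges) this yields the NP-completeness of \textsc{Surjective $C_6$-Homomorphism}$|_{{\cal B}_4}$. The only delicate point is step (b): one has to verify, case by case through the gadgets of Figure~\ref{fig:Diagonals} and the coloring diagrams of Figures~\ref{fig:coloring3456} and~\ref{fig:lemma1}, that every appeal to ``surjectivity'' of $f$ in Lemma~\ref{lemma:C6contraction} requires only a nonempty preimage of a vertex of $C_6$, and never a preimage of a specific edge of $C_6$. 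This is a routine but essential inspection, and constitutes the main obstacle of the argument.
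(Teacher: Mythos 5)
Your proposal is correct, but for the second claim it takes a genuinely different route from the paper. The paper never re-opens Lemma~\ref{lemma:C6contraction}: instead it proves the short, general structural fact that on \emph{any} graph of diameter at most~4, every vertex-surjective homomorphism $f$ to $C_6=(1,\ldots,6)$ is automatically edge-surjective. Indeed, if some edge, say $12$, had no preimage, then any path between $u\in f^{-1}(1)$ and $v\in f^{-1}(2)$ (both sets nonempty by surjectivity) would map to a walk from $1$ to $2$ in $C_6$ avoiding the edge $12$, hence of length at least~$5$, giving ${\sf dist}(u,v)\ge 5$ and contradicting ${\sf diam}(G)\le 4$. So \Comp\ and \textsc{Surjective $C_6$-Homomorphism} are literally the same problem on all of ${\cal B}_4$, and hardness transfers in one line. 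Your alternative --- auditing the backward direction of Lemma~\ref{lemma:C6contraction} to confirm that surjectivity is only ever invoked as nonemptiness of vertex preimages (namely $f^{-1}(4)\neq\emptyset$ and $f^{-1}(1)\neq\emptyset$), that Lemma~\ref{lemma:diagonal} is purely local and homomorphism-only, and that the resulting $H$-retraction covers all six target edges --- does go through; I verified that edge-surjectivity plays no role in that case analysis. But what it buys is only equivalence on the specific instances $G'$ produced by the reduction, at the price of the ``routine but essential inspection'' you yourself flag as the main obstacle; the paper's distance argument eliminates exactly that inspection and yields the stronger, reusable fact that the two problems coincide on every bipartite diameter-$4$ instance.
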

\begin{proof}
Let $G$ be a bipartite graph with diameter at most four. The first statement follows by Theorem~\ref{theo:C6retract} and Lemma~\ref{lemma:C6contraction}. We argue that $G$ has an edge-surjective $C_6$-homomorphism if and only $G$ has a surjective $C_6$-homomorphism. Clearly an edge-surjective $C_6$-homomorphism is a surjective $C_6$-homomorphism, so it remains to show that given a surjective $C_6$-homomorphism $f$ of $G$, it follows that $f$ is an edge-surjective $C_6$-homomorphism as well. Suppose otherwise, and denote the target $C_6$ by $(1,\ldots,6)$. Without loss of generality, suppose that edge $12$ has no pre-image, i.e., there is no $uv\in E(G)$ such that $f(u)=1$ and $f(v)=2$. This means that every path between $u\in f^{-1}(1)$ and $v\in f^{-1}(2)$ is mapped to $(1,6,5,4,3,2)$, which implies that ${\sf dist}(u,v)\ge 5$, contradicting the hypothesis that ${\sf diam}(G) \leq 4$.
\end{proof}

Finally, by applying a trick similar to the one used in~Proposition~\ref{prop:complexityOrder}, we get that:

\begin{corollary}\label{cor:Comp_Hom2}
For an integer $k\ge 4$, let $M_k$ denote the complete bipartite graph $K_{k,k}$ minus a perfect matching. Also, let $G\in {\cal B}_3$ and $H\subseteq G$ such that $H\cong M_k$. Then  \textsc{Surjective $M_k$-Homomorphism},  \textsc{Edge-Surjective $M_k$-Homomorphism}, and \RetF{M_k} are all $\NP$-complete.
\end{corollary}

\section{\textsc{3-Biclique Partition} on bipartite graphs with diameter four}\label{sec:3biclique}

In this section we first show how Corollary~\ref{cor:Comp_Hom} implies the $\NP$-completeness of the \textsc{3-Biclique Partition} problem, and then we present the flaw in the proof in~\cite{FMPS.09}.
Recall that $\overline{G}_{\cal B}$ denotes the bipartite complement of $G$.

\begin{corollary}\label{cor:3biclique}
Let $G$ be a bipartite graph such that $\overline{G}_{\cal B}$ has diameter at most four. Then, deciding whether $G$ has a 3-biclique partition is $\NP$-complete.
\end{corollary}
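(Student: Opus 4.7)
The plan is to reduce from \textsc{Surjective $C_6$-Homomorphism}, which by Corollary~\ref{cor:Comp_Hom} is $\NP$-complete when restricted to $\mathcal{B}_4$. Given an input bipartite graph $H \in \mathcal{B}_4$ with bipartition $(X, Y)$, I would form $G = \overline{H}_{\cal B}$. Then $G$ is bipartite with the same bipartition, and its bipartite complement $\overline{G}_{\cal B} = H$ has diameter at most $4$, so $G$ is a valid instance of the problem. Membership in $\NP$ is immediate, and the reduction is clearly polynomial.

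The core of the proof is an equivalence between surjective $C_6$-homomorphisms of $H$ and partitions of $V(G)$ into three bicliques. Label $C_6 = (1,2,3,4,5,6)$ so that $\{1,3,5\}$ and $\{2,4,6\}$ form its bipartition; the non-edges of $C_6$ between these two parts are precisely the three ``diagonals'' $\{1,4\}$, $\{3,6\}$, and $\{5,2\}$. For the forward direction, given a surjective homomorphism $f \colon V(H) \to V(C_6)$, I may assume that $f(X) \subseteq \{1,3,5\}$ and $f(Y) \subseteq \{2,4,6\}$. If $u \in f^{-1}(i)$ and $v \in f^{-1}(j)$ with $\{i,j\}$ a diagonal, then $f(u)f(v) \notin E(C_6)$ forces $uv \notin E(H)$; since $u \in X$ and $v \in Y$, this means $uv \in E(G)$. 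Hence the three sets $f^{-1}(1) \cup f^{-1}(4)$, $f^{-1}(3) \cup f^{-1}(6)$, $f^{-1}(5) \cup f^{-1}(2)$ each induce a complete bipartite subgraph of $G$ with both parts non-empty (by surjectivity), and together they partition $V(G)$.

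For the converse, starting from a partition of $V(G)$ into three bicliques $B_i = (A_i, C_i)$ with $A_i \subseteq X$ and $C_i \subseteq Y$, I would define $f$ by $f(A_1) = 1$, $f(C_1) = 4$, $f(A_2) = 3$, $f(C_2) = 6$, $f(A_3) = 5$, $f(C_3) = 2$. Surjectivity follows because each part of each biclique is non-empty. To check the homomorphism property, take $uv \in E(H)$ with $u \in X$ and $v \in Y$; then $uv \notin E(G)$, so $u$ and $v$ must lie in different bicliques $B_i$ and $B_j$ with $i \neq j$. A short case analysis over the six ordered pairs $(i,j)$ with $i \neq j$ shows that $f(u)f(v)$ always lands on a non-diagonal, hence an edge, of $C_6$.

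The main subtlety I anticipate is matching the ``at most three parts'' convention of $3$-\textsc{Biclique Partition} with the ``exactly three'' structure used above. I expect this to cause no real difficulty: since the reduced instance $G$ will have many vertices in each of $X$ and $Y$ and will not be complete bipartite (these properties can be enforced by a trivial pre-processing that does not affect the existence of a surjective $C_6$-homomorphism of $H$), any biclique partition of $G$ with one or two parts can be refined into a partition with exactly three parts by splitting a biclique whose parts contain at least three vertices in total, thereby keeping the equivalence tight.
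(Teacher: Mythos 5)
Your proposal is correct and takes essentially the same approach as the paper: the paper's proof of Corollary~\ref{cor:3biclique} rests on exactly the correspondence you describe, pairing each biclique $V_i$ with a diagonal of $C_6$ (it defines $f^{-1}(1)=X\cap V_1$, $f^{-1}(4)=Y\cap V_1$, $f^{-1}(3)=X\cap V_3$, $f^{-1}(6)=Y\cap V_3$, $f^{-1}(5)=X\cap V_2$, $f^{-1}(2)=Y\cap V_2$) and then invokes Corollary~\ref{cor:Comp_Hom}. The only difference is that you explicitly spell out details the paper leaves implicit, namely the part-alignment of the homomorphism with the bipartition and the refinement of an ``at most three parts'' partition into exactly three bicliques, both of which you handle correctly.
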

\begin{proof}
Denote the parts of $G$ by $X$ and $Y$. It suffices to notice that $V_1,V_2,V_3$ is a 3-biclique partition of $G$ if and only if the function $f$ defined as follows is a surjective $C_6$-homomorphism of $\overline{G}_{\cal B}$: $f^{-1}(1) = X\cap V_1$, $f^{-1}(2) = Y\cap V_2$, $f^{-1}(3) = X\cap V_3$, $f^{-1}(4) = Y\cap V_1$, $f^{-1}(5) = X\cap V_2$, and $f^{-1}(6) = Y\cap V_3$. The theorem thus follows from Corollary~\ref{cor:Comp_Hom}.
\end{proof}

 Fleischner et al.~\cite{FMPS.09} presented a hardness proof for the  \textsc{$k$-Biclique Partition} problem for a general $k\ge 3$, but since their reduction for values of $k$ larger than~3 can be obtained using the trick presented in Proposition~\ref{prop:complexityOrder}, for the sake of simplicity we consider $k=3$. Still for the sake of simplicity, we work on the bipartite complement of their construction, which is from the {\LkCol{3}} problem. So, consider $G$ bipartite with parts $X,Y$, and a list assignment $L$ such that $L(u)\subseteq\{1,2,3\}$ for every $u\in V(G)$. Let $G'$ be obtained from $G$ by adding a cycle $C=(x_1,y_2,x_3,y_1,x_2,y_3,x_1)$ (which alternatively can be seen as the complete bipartite graph minus the perfect matching $\{x_iy_i\mid i\in [3]\}$). Then, for every $u\in X$, add an edge from $u$ to $y_i$ if and only if $i\notin L(u)$; do the same for each $u\in Y$ and $x_i$. They claim that $G$ has an $L$-coloring if and only if $G'$ has a retraction to $C$, if and only if $\overline{G'}_{\cal B}$ has a $3$-biclique partition. Indeed one can see that any retraction $f$ of $G'$ to ${C}$ gives a 3-biclique partition $V_1,V_2,V_3$ of  $\overline{G'}_{\cal B}$ with  each $V_i$ defined as $ \{u\in V(G)\mid f(u)\in \{x_i,y_i\}\}$. However, in the reverse implication, it is not necessarily true that a 3-biclique partition $V_1,V_2,V_3$ will map edge $x_iy_i$ inside of $V_i$ for every $i\in [3]$, as the authors claim. For example, when $G$ is simply an edge $uv$, and $L(u)=L(v)=\{1,2\}$, we get that $V_1=\{x_1,x_2,v\}$, $V_2 = \{y_1,y_2,u\}$, and $V_3 = \{x_3,y_3\}$ is a valid 3-biclique partition of $V(G')$.

Nevertheless, their proof does imply that \RetF{C_6} is $\NP$-complete. We provided another proof of this fact in Theorem~\ref{theo:C6retract} because we needed stronger constraints on $G$ and $H$ in order to prove Corollaries~\ref{cor:Comp_Hom} and~\ref{cor:3biclique}.


\section{\textsc{\texorpdfstring{$k$}{k}-Fall Coloring}}\label{sec:3fall}

In this section we investigate the complexity of \textsc{$k$-Fall Coloring} on bipartite graphs with diameter at most~$d$ for every pair of integers $k,d$. As in the case of list coloring problems, again the only case that we leave open is when $k=3$ and $d=3$. We conjecture that this case is also $\NP$-complete, and we prove in Proposition~\ref{prop:3falld3} that, if so, then the cases left open in Table~\ref{table:diam3} would also be $\NP$-complete.
We start with the following technical lemma.

\begin{lemma}\label{lem:tool}
Let $G$ be a bipartite graph with vertex bipartition $(X,Y)$ and $f$ be a $k$-fall-coloring of $G$. If $k\ge 3$, then $f(X)=f(Y)=[k]$.
\end{lemma}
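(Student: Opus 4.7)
My plan is to exploit the bipartiteness together with the b-vertex condition. Since $G$ is bipartite, for any $v\in X$ we have $N(v)\subseteq Y$, and since $f$ is proper, $f(v)\notin f(N(v))$. Combined with the b-vertex condition $f(N[v])=[k]$, this forces the clean identity $f(N(v))=[k]\setminus\{f(v)\}$, and symmetrically $f(N(u))=[k]\setminus\{f(u)\}$ for every $u\in Y$. So each single vertex already ``witnesses'' $k-1$ colors on the opposite side.

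The key step is then to show $|f(X)|\ge 2$ (and by symmetry $|f(Y)|\ge 2$). Suppose for contradiction that $f(X)=\{c\}$ for a single color $c$. Pick any $y\in Y$; since $G$ is connected, $y$ has a neighbor, and $N(y)\subseteq X$, so $f(N(y))\subseteq\{c\}$. Then $f(N[y])\subseteq\{c,f(y)\}$, a set of size at most $2$, which contradicts $y$ being a b-vertex once $k\ge 3$. Hence at least two distinct colors appear on $X$.

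Finally, pick $u,v\in X$ with $a:=f(u)\ne f(v)=:b$. By the identity from the first step,
\[
f(Y)\supseteq f(N(u))\cup f(N(v))=\bigl([k]\setminus\{a\}\bigr)\cup\bigl([k]\setminus\{b\}\bigr)=[k],
\]
so $f(Y)=[k]$. The symmetric argument, swapping the roles of $X$ and $Y$, gives $f(X)=[k]$. I do not expect any real obstacle here; the only subtle point is remembering to invoke connectedness (assumed at the start of Section~\ref{sec:preliminaries}) so that every vertex in $Y$ has a neighbor, which is what makes the contradiction in the middle step work.
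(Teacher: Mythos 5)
Your proof is correct. It takes a somewhat different route from the paper's, which is a single chain of contradiction: assuming some color, say $1$, is missing from $f(X)$, the paper observes that any $u\in Y$ with $f(u)\neq 1$ would have $N(u)\subseteq X$ and hence no neighbor colored $1$, violating the b-vertex condition; therefore $f^{-1}(1)=Y$, i.e., the \emph{entire} part $Y$ is monochromatic, whence $|f(N[v])|\le 2$ for every $v\in X$, contradicting $k\ge 3$. You instead extract the positive identity $f(N(v))=[k]\setminus\{f(v)\}$ (properness plus the b-vertex condition), rule out a monochromatic part by the same local mechanism the paper uses in its final step, and then finish \emph{directly}: two differently colored vertices $u,v\in X$ already witness $\bigl([k]\setminus\{a\}\bigr)\cup\bigl([k]\setminus\{b\}\bigr)=[k]$ on $Y$. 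Both arguments run on the same engine --- in a bipartite graph, a (near-)monochromatic part starves the opposite part of colors --- but yours buys a constructive conclusion once two colors appear in a part, while the paper's contradiction yields the stronger intermediate fact that a missing color on one side would force the other side to be entirely monochromatic. One small polish: connectedness is not actually needed where you invoke it. The b-vertex condition $f(N[y])=[k]$ with $k\ge 3$ itself forces every vertex to have at least $k-1\ge 2$ neighbors, so $N(y)\neq\emptyset$ comes for free (and your middle step works even for an isolated $y$, since then $f(N[y])=\{f(y)\}$ has size $1<k$); for the same reason both parts are nonempty, so the vertices $u,v$ in your last step exist.
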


\begin{proof}
    Towards a contradiction, suppose that $1 \notin f(X)$, which implies that every $v \in X$ has some neighbor in $f^{-1}(1) \subseteq Y$; however, there can be no $u \in Y$ with $f(u) \neq 1$, otherwise $1 \notin f(N[u])$, so we have $f^{-1}(1) = Y$.
    But in this case, since $k \geq 3$ and $G$ is bipartite, $|f(N[v])| \leq 2$ for every $v \in X$, contradicting the hypothesis that $f$ is a $k$-fall-coloring.
\end{proof}

The following is analogous to Proposition~\ref{prop:complexityOrder}.

\begin{proposition}\label{prop:fallk_k+1}
Let $k \geq 3$ be a fixed positive integer. Then,
\[\kFall{k}|_{{\cal B}}\preceq \kFall{(k+1)}|_{{\cal B}_3}.\]
\end{proposition}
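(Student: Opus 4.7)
The plan is to mimic the construction of Proposition~\ref{prop:complexityOrder}: given an instance $G \in \mathcal{B}$ of $\kFall{k}$ with bipartition $(X,Y)$, I would form $G'$ by adding two new vertices $x, y$ with $N_{G'}(x) = Y$ and $N_{G'}(y) = X$, so that $x$ joins the $X$-side, $y$ joins the $Y$-side, and $xy \notin E(G')$. The resulting graph is bipartite, and the same case analysis as in Proposition~\ref{prop:complexityOrder} shows $G' \in \mathcal{B}_3$; the only non-trivial pair is $(x, y)$ itself, handled via any edge $ww' \in E(G)$ with $w \in Y$ and $w' \in X$ through the path $x, w, w', y$, which exists because $G$ is connected and has at least one edge.

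For the forward direction, given a $k$-fall-coloring $f$ of $G$, I would extend it to a coloring $f'$ of $G'$ by setting $f'(x) = f'(y) = k+1$. Properness is immediate since $xy \notin E(G')$ and the neighbors of $x$ and $y$ take colors from $[k]$. The fall condition at every $u \in V(G)$ is preserved because $u$ acquires a new neighbor in $\{x, y\}$ colored $k+1$; the fall condition at $x$ (resp.\ $y$) reduces to $f(Y) = [k]$ (resp.\ $f(X) = [k]$), which is exactly the content of Lemma~\ref{lem:tool} applied to $G$.

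For the reverse direction, suppose $f'$ is a $(k+1)$-fall-coloring of $G'$. Applying Lemma~\ref{lem:tool} to $G'$, which is valid since $k+1 \geq 4$, forces both parts of $G'$ to use all $k+1$ colors. Let $c = f'(x)$: since $x$ dominates $Y$, no vertex of $Y$ can be colored $c$, so $f'(y) = c$ is forced, for otherwise color $c$ would be absent from $Y \cup \{y\}$; after relabeling, assume $c = k+1$. Symmetrically, $y$ dominates $X$, so color $k+1$ does not occur in $V(G)$, whence $f := f'|_{V(G)}$ takes values only in $[k]$. Finally, for any $u \in X$ we have $N_{G'}[u] = N_G[u] \cup \{y\}$, so the identity $[k+1] = f'(N_{G'}[u]) = f(N_G[u]) \cup \{k+1\}$ yields $f(N_G[u]) = [k]$; the case $u \in Y$ is symmetric. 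The main obstacle is confining the new color $k+1$ to $\{x, y\}$, and this is precisely where Lemma~\ref{lem:tool} becomes indispensable: without it, nothing would a priori prevent $f'$ from spreading color $k+1$ across $V(G)$ in ways that would spoil the restriction $f'|_{V(G)}$.
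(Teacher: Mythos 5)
Your proposal is correct and takes essentially the same approach as the paper: the identical two-vertex construction borrowed from Proposition~\ref{prop:complexityOrder}, coloring $x$ and $y$ with $k+1$ in the forward direction, and invoking Lemma~\ref{lem:tool} in the reverse direction to force $f'(y)=f'(x)$ and confine the color $k+1$ to $\{x,y\}$. You merely spell out the final verification (that $f'|_{V(G)}$ is a $k$-fall-coloring) that the paper compresses into ``one can see''.
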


\begin{proof}
Let $G\in {\cal B}$ with parts $X$ and $Y$, and $G'$ be obtained from $G$ by adding new vertices $x,y$ together with all edges from $x$ to vertices in $Y$ and all edges from $y$ to vertices in $X$.
This is the same graph as the one constructed in Proposition~\ref{prop:complexityOrder}, and thus we already know that $G'$ has diameter at most three.
Now, we prove that $G$ has a $k$-fall-coloring if and only if $G'$ has a $(k+1)$-fall-coloring. If $f$ is a $k$-fall-coloring of $G$, then let $f'$ be obtained from $f$ by coloring $x$ and $y$ with color $k+1$. Every vertex of $X$ and $Y$ is adjacent to the new color, so they continue to be b-vertices, and $x$ and $y$ are b-vertices by Lemma~\ref{lem:tool}.
Now, let $f'$  be a $(k+1)$-fall-coloring of $G'$, and suppose without loss of generality that $f(x) = k+1$. Again, by Lemma~\ref{lem:tool} we get that each part must contain every color. Therefore, because $x$ is complete to $Y$, we get that the only vertex on $Y\cup \{y\}$ that can be colored with $k+1$ is $y$, i.e., $f(y) = k+1$. In this case, one can see that $f'$ restricted to $G$ must define a $k$-fall-coloring of $G$.
\end{proof}

We get the following partial classification of the problem. As we already mentioned, the only open case is when $k=3$ and $d=3$.

\begin{corollary}\label{cor:4Call}
Let $k,d$ be positive integers. Then $\kFall{k}|_{{\cal B}_d}$ is polynomial when $k\le 2$ or $d=2$, and $\NP$-complete when either $k\ge 4$ and $d\ge 3$, or $k= 3$ and $d\ge 4$.
\end{corollary}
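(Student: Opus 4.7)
The plan is to assemble the corollary from the polynomial-time cases and Proposition~\ref{prop:fallk_k+1}, combined with the (separately established) NP-completeness of $\kFall{3}|_{{\cal B}_4}$ that is proved later in Section~\ref{sec:3fall}.

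For the polynomial side I would handle the three regimes independently. The case $k=1$ is trivial. For $k=2$, I would use that we may assume $G$ is connected and bipartite, so the unique (up to swapping) proper $2$-coloring assigns color $1$ to one side of the bipartition and $2$ to the other; checking whether this is a fall-coloring reduces to checking the absence of isolated vertices, which is done in linear time. For the row $d=2$ with $k \geq 3$, the input graph is complete bipartite with non-empty parts $X$ and $Y$; Lemma~\ref{lem:tool} forces any $k$-fall-coloring to satisfy $f(X)=f(Y)=[k]$, while properness in a complete bipartite graph imposes $f(X)\cap f(Y)=\emptyset$, a contradiction for $k\geq 1$. Hence the answer is always negative, so the problem is decided in constant time.

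For the hardness side, the base ingredient is the forthcoming result that $\kFall{3}|_{{\cal B}_4}$ is $\NP$-complete. Since ${\cal B}_4 \subseteq {\cal B}_d$ for every $d \geq 4$, this immediately gives NP-completeness of $\kFall{3}|_{{\cal B}_d}$ for all $d\geq 4$. To cover $k \geq 4$ with $d \geq 3$, I would induct on $k$ using Proposition~\ref{prop:fallk_k+1}: from $\kFall{3}|_{{\cal B}_4}$ NP-complete we get $\kFall{3}|_{{\cal B}}$ NP-complete (by relaxing the diameter bound), so Proposition~\ref{prop:fallk_k+1} yields $\kFall{4}|_{{\cal B}_3}$ NP-complete, which in turn gives $\kFall{4}|_{{\cal B}}$ NP-complete, and so on. Each step only requires $k\geq 3$, the hypothesis of Proposition~\ref{prop:fallk_k+1}, so the induction goes through cleanly and delivers $\kFall{k}|_{{\cal B}_3}$ NP-complete for every $k \geq 4$. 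Combined with ${\cal B}_3 \subseteq {\cal B}_d$ for $d \geq 3$, this finishes the hard cases.

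There is no real obstacle: the corollary is a packaging of facts already available in the section, with only two minor points requiring attention. The first is the argument for $d=2$, where one must invoke Lemma~\ref{lem:tool} together with the disjointness of colors across the two sides of a complete bipartite graph. The second is bookkeeping for the iterated use of Proposition~\ref{prop:fallk_k+1}, where one has to observe that its conclusion (membership in ${\cal B}_3 \subseteq {\cal B}$) is exactly the hypothesis needed for the next application, so the induction on $k$ does not degenerate.
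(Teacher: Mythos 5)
Your proposal is correct and follows essentially the same route as the paper: the polynomial cases are handled by direct arguments (the paper disposes of $d=2$ with the one-line observation that a complete bipartite graph admits no fall-coloring with more than two colors, where you invoke Lemma~\ref{lem:tool} plus disjointness of $f(X)$ and $f(Y)$ --- the same thing), and the hardness cases follow by iterating Proposition~\ref{prop:fallk_k+1}, with Theorem~\ref{thm:4fallCol} covering $k=3$, $d\ge 4$. The only (harmless) divergence is the seed of the $k\ge 4$ induction: the paper uses the known $\NP$-completeness of \kFall{3} on bipartite graphs due to Laskar and Lyle~\cite{LL.09}, whereas you derive it by relaxing the diameter bound in Theorem~\ref{thm:4fallCol}, which is equally valid and makes the argument self-contained.
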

\begin{proof}
Observe that if $G$ is a complete bipartite graph, i.e., a bipartite graph with  diameter two, then every coloring that uses more than $2$  colors will have a non-b-vertex, hence the answer to \kFall{k} is trivially \no\ when $k\ge 3$ and $G$ is a complete bipartite graph (that is, $d=2$). When $k\le 2$, then either $G$ has an isolated vertex and the answer is \no, or it does not and the answer is \yes\ since any 2-coloring is also a $2$-fall-coloring. For $k\ge 4$, it is known that \kFall{3} is $\NP$-complete on bipartite graphs~\cite{LL.09}, which applying Proposition~\ref{prop:fallk_k+1} and induction on $k$ gives us that \kFall{k}$|_{{\cal B}_d}$ is also $\NP$-complete for every $d\ge 3$.
For the remaining case, we prove in Theorem~\ref{thm:4fallCol}  that $\kFall{3}|_{{\cal B}_4}$ is $\NP$-complete.
\end{proof}

Before we move on to the proof of the case $k=3$ and $d=4$, we prove the following result.

\begin{proposition}\label{prop:3falld3}
$\kFall{3}|_{{{\cal B}_3}}\preceq \kPre{3}|_{{{\cal B}_3}}.$
\end{proposition}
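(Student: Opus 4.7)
The plan is to construct, from an instance $G\in\mathcal{B}_3$ of $\kFall{3}$, an instance $(G',p)$ of $\kPre{3}|_{\mathcal{B}_3}$ such that $G$ admits a 3-fall-coloring if and only if $p$ extends to a proper 3-coloring of $G'$. The core difficulty is that the fall condition is existential (``every color must appear in each closed neighborhood''), whereas precoloring extension natively expresses negative/conjunctive constraints of the form ``vertex $u$ must avoid color $c$''. The reduction therefore hinges on designing a bipartite disjunction gadget that, via precolored anchors, forces each vertex of $G$ to be a b-vertex.

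First I will build $G'$ by taking $G$ and attaching, to each $v\in V(G)$, a small \emph{checker gadget} $H_v$ placed in the bipartition $(X,Y)$ of $G$ so that $G'$ is still bipartite. By Lemma~\ref{lem:tool}, any 3-fall-coloring uses all three colors in each of $X$ and $Y$, so I may pin a canonical labeling of the colors by attaching $H_v$ to a common precolored ``palette'' consisting of three vertices in $X$ and three vertices in $Y$ of specified colors; these palette vertices also play the role of the precolored anchors inside the $H_v$'s. For every pair $(v,c)\in V(G)\times[3]$, $H_v$ will contain a short path from $N_G[v]$ to the anchor of color $c$ in the palette, designed so that the subgadget for $(v,c)$ becomes uncolorable exactly when $c\notin f(N_G[v])$: if $c$ is absent from $f(N_G[v])$ then an auxiliary vertex inside $H_v$ is forced both to equal $c$ (via its path to the palette) and to differ from $c$ (via an adjacency to the color-$c$ anchor), yielding a contradiction.

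Next I will verify correctness in both directions. For the forward direction, given a 3-fall-coloring $f$ of $G$, I extend $f$ to $G'$ by coloring each auxiliary vertex of $H_v$ according to the unique color dictated by its local constraints; since $c\in f(N_G[v])$ for every $(v,c)$, the conflict inside $H_v$ never arises. For the reverse direction, given a valid extension $f'$ of $p$, the restriction $f'|_{V(G)}$ is a proper 3-coloring of $G$, and the non-triviality of each $H_v$ forces $f'(N_G[v])=[3]$ for every $v\in V(G)$; together with Lemma~\ref{lem:tool}, this is exactly the fall condition.

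Finally, I will check that $G'\in\mathcal{B}_3$. Bipartiteness is immediate because every new vertex is placed in a prescribed side. For the diameter, I will use that (i) every vertex of $G$ is within distance~3 of every other vertex of $G$; (ii) each new vertex of $H_v$ is at distance~1 or~2 from some vertex of $N_G[v]$; and (iii) the common palette is within distance~$\le 2$ of everything in $G'$ because each checker gadget is short and attaches directly to both $N_G[v]$ and the palette. Combining these, every pair of vertices in $G'$ sits at distance at most $3$.

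The main obstacle will be the careful design of the checker gadget $H_v$: standard OR-gadgets for 3-coloring reductions rely on triangles, which bipartiteness forbids, so I must realize the disjunction ``$v=c$ or some $u\in N_G(v)$ has color $c$'' through length-2 or length-3 paths anchored at precolored palette vertices, and I must do so while keeping all newly created shortest paths of length at most~3. Balancing the correctness of the gadget (it must fail exactly when $c\notin f(N_G[v])$, never spuriously) against the diameter constraint is the delicate step of the construction.
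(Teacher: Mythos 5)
There is a genuine gap: the entire proof rests on the ``checker gadget'' $H_v$, and this gadget is never constructed -- you yourself flag its design as ``the delicate step,'' but it is not a finishing detail, it is the whole theorem. The constraint you need to express, ``$c\in f'(N_G[v])$,'' is a disjunction of \emph{unbounded arity} (over all of $N_G[v]$), and your sketch of it is inconsistent: you describe an auxiliary vertex that is ``forced to equal $c$ via its path to the palette,'' but a vertex whose color is forced regardless of the coloring of $N_G[v]$ cannot detect whether $c$ appears in $N_G[v]$; conversely, a gadget that is colorable whenever \emph{some} neighbor of $v$ has color $c$ must admit a valid completion for every such configuration, which is exactly the bipartite OR-gadget that bipartiteness makes hard (as you note, the standard gadgets use triangles) and which chaining over an unbounded neighborhood would stretch beyond the diameter budget. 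Moreover, your diameter verification is quantitatively wrong: in a bipartite graph, diameter $3$ forces any two vertices on the \emph{same} side to have a common neighbor (same-side distances are even, hence at most $2$). Your item (iii), that everything is within distance $2$ of a common palette, only yields diameter $4$, and you never exhibit common neighbors for, say, a gadget vertex in $H_v$ and a gadget vertex in $H_{v'}$ on the same side, or for a gadget vertex and a far-away original vertex. Since the authors explicitly leave the complexity of $\kPre{3}$ on ${\cal B}_3$ open, a diameter-$3$-preserving gadgetry of the strength you posit should be viewed with suspicion; it is precisely what nobody knows how to build.

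The paper avoids gadgets entirely by using a \emph{Turing} reduction. Given $G\in{\cal B}_3$, it enumerates all $O(n^6)$ induced $6$-cycles $C$ and, for each, asks one $\kPre{3}$ query on $G$ itself with the six vertices of $C$ precolored by the (unique up to relabeling) fall-coloring of $C_6$ -- so membership in ${\cal B}_3$ is inherited for free, which is exactly where your construction struggles. Correctness uses two structural facts: by Lemma~\ref{lem:tool} plus the common-neighbor property of diameter-$3$ bipartite graphs, any $3$-fall-coloring of $G$ contains an induced fall-colored $C_6$ (take $v_1,v_2,v_3$ in one part with three distinct colors and common neighbors $w_{i,j}$); and conversely, any extension of a fall-colored $C_6$ is a $3$-b-coloring, and by Faik's theorem every $3$-b-coloring of a bipartite graph of diameter at most~$3$ is already a $3$-fall-coloring. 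Your plan, even if the gadget existed, would additionally need repair in the reverse direction (there, $f'(N_G[v])=[3]$ for all $v$ \emph{is} the fall condition; Lemma~\ref{lem:tool} is a consequence, not a needed ingredient), but the missing gadget and the failed diameter-$3$ argument are the decisive defects.
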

\begin{proof}
We present a Turing reduction from $\kFall{3}|_{{\cal B}_3}$ to $\kPre{3}|_{{\cal B}_3}$, that is, we show that if $\kPre{3}|_{{\cal B}_3}$ can be solved in polynomial time, then we can solve $\kFall{3}|_{{\cal B}_3}$ by solving a polynomial number of instances of $\kPre{3}|_{{\cal B}_3}$. Let $G$ be a bipartite graph with diameter at most three. Given a cycle $C$ of length~6 and a 3-coloring $f$ of $G$, we say that $C$ is \emph{fall-colored in $f$} if $f$ restricted to $C$ is a fall-coloring. We claim that $G$ has a 3-fall-coloring if and only if there exists a $C_6$ in $G$ whose 3-fall-coloring can be extended to a proper 3-coloring of $G$. Observe that, if true, we get the desired reduction since it would suffice to test, for every subset of vertices of size~6 that induce a cycle $C$, whether a 3-fall-coloring of $C$ can be extended to $G$ (observe that this 3-fall-coloring is unique up to relabeling).

Let $(X,Y)$ be the bipartition of $G$. First, suppose that $f$ is a 3-fall-coloring of $G$. By Lemma~\ref{lem:tool}, we know that $f(X) = f(Y) = \{1,2,3\}$. Let $v_1,v_2,v_3\in X$ be colored with 1, 2, and 3, respectively. Because $G$ has diameter at most three, we get that $N(v_i)\cap N(v_j)\neq \emptyset$, for every $\{i,j\}\subseteq \{1,2,3\}$, $i\neq j$. So, let $w_{i,j}\in N(v_i)\cap N(v_j)$, for each choice of $i,j$. Since $f(v_1,v_2,v_3) = \{1,2,3\}$ and $f$ is a proper $3$-coloring, we get that $\{w_{1,2},w_{2,3},w_{1,3}\}$ are all distinct, and that $f(w_{i,j}) = \ell$ where $\ell\in \{1,2,3\}\setminus\{i,j\}$. Then the cycle $(v_1,w_{1,2},v_2,w_{2,3},v_3,w_{1,3})$ is an induced $C_6$ because $G$ is bipartite and because $f$ is a proper coloring.
Conversely, suppose that a 3-fall-coloring $f$ of a cycle $C$ of length~6 can be extended to a proper 3-coloring $f'$ of $G$ (note that the fall-coloring of $C$ is unique up to relabeling). A \emph{3-b-coloring} is a proper 3-coloring such that each color class has \emph{at least one} b-vertex. Note that any extension of $f$ is a 3-b-coloring of $G$, since in $C$ there are already b-vertices of all the~3 colors. Faik~\cite{F.05}  proved that every 3-b-coloring of a bipartite graph with diameter three is also a 3-fall-coloring\footnote{Since reference~\cite{F.05} is in French, for completeness we present the proof in Appendix~\ref{ap:Faik}.}; hence, $f'$ is a 3-fall-coloring of $G$.
\end{proof}

Now, we prove that \textsc{$3$-Fall-Coloring} is $\NP$-complete even when restricted to bipartite graphs with diameter at most four. We mention that our proof is an improvement on the proof presented by Laskar and Lyle~\cite{LL.09}, where the constructed graphs have diameter six, although the authors do not mention that in their proof.

\begin{theorem}\label{thm:4fallCol}
$\kFall{3}|_{{\cal B}_4}$ is $\NP$-complete.
\end{theorem}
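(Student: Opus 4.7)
The plan is to reduce from \textsc{3-Uniform 2-Col}, following the template of Theorem~\ref{theo:C6retract}. Given a 3-uniform hypergraph $H=(V,E)$, I would construct a bipartite graph $G$ with bipartition $(X,Y)$ and diameter at most~$4$ such that $G$ admits a 3-fall-coloring if and only if $H$ admits a proper 2-coloring. The encoding is that, in any 3-fall-coloring, the representatives of $V(H)$ in $G$ will be forced into colors $\{1,2\}$, so that their color pattern directly gives a 2-coloring of $H$, while color~$3$ plays the role of an ``anchor'' used to meet the b-vertex condition and to shrink distances.

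For each $v\in V$ I would place a vertex $u_v\in X$, and then attach a hyperedge-gadget to each $e=\{v_{i_1},v_{i_2},v_{i_3}\}\in E$. The gadget would be a small bipartite graph, of the same flavor as the edge gadget used in the proof of Theorem~\ref{theo:C6retract} but augmented with private color-$3$ anchors that allow every internal vertex to become a b-vertex exactly when the three attachment points do not all receive the same color from $\{1,2\}$. In addition, to control the diameter I would add two global anchor vertices $\alpha\in X$ and $\beta\in Y$, both intended to be colored~$3$, with $\beta$ adjacent to a dominating subset of $X$ and $\alpha$ adjacent to a dominating subset of $Y$; by Lemma~\ref{lem:tool} the two parts must realize all three colors, so $\alpha$ and $\beta$, together with the forcing inside each gadget, push every $u_v$ into $\{1,2\}$. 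The diameter bound $\mathrm{diam}(G)\leq 4$ would follow essentially as in Corollary~\ref{cor:3PreExt}.

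For correctness I would argue both directions. A proper 2-coloring of $H$ extends to $G$ by coloring each $u_v$ with its hypergraph color, $\alpha$ and $\beta$ with~$3$, and then completing every hyperedge gadget using one of finitely many canonical extensions parametrized by the color pattern of $(u_{i_1},u_{i_2},u_{i_3})$; by hypothesis no such pattern is monochromatic, so the gadget can always be completed into a proper 3-coloring in which every vertex is a b-vertex. Conversely, given any 3-fall-coloring of $G$, Lemma~\ref{lem:tool} combined with the global anchors forces $\{u_v:v\in V\}$ into $\{1,2\}$, and the gadget attached to $e$ then witnesses that $(u_{i_1},u_{i_2},u_{i_3})$ cannot be monochromatic, yielding a valid 2-coloring of $H$.

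The main obstacle will be the hyperedge-gadget design. Demanding that \emph{every} internal vertex be a b-vertex in each feasible extension is strictly stronger than the precoloring-extension condition exploited in Section~\ref{sec:3PreExt}, so the straightforward edge gadget of Theorem~\ref{theo:C6retract} does not transfer verbatim: some of its internal vertices would fail to see all three colors in their closed neighborhoods. The delicate point is to attach exactly enough private color-$3$ anchors, and connect them to both the ``color-$1$'' and the ``color-$2$'' sides of the gadget, to repair the b-vertex condition in every non-monochromatic pattern while keeping the gadget infeasible in the monochromatic case, and to do all of this without introducing any pair at distance more than~$4$. This is also where the improvement over the diameter-$6$ construction of Laskar and Lyle~\cite{LL.09} has to be extracted.
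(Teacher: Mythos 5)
There is a genuine gap: the central object of your reduction --- the hyperedge gadget --- is never constructed, and you explicitly defer its design (``the main obstacle will be the hyperedge-gadget design''). Everything else in your outline (source problem \textsc{3-Uniform 2-Col}, the use of Lemma~\ref{lem:tool}, anchors to force the vertex-representatives into two colors, a diameter argument in the style of Corollary~\ref{cor:3PreExt}) is scaffolding whose correctness depends entirely on properties of this unspecified gadget: that it is completable to a fall-coloring in every non-monochromatic pattern, infeasible in the monochromatic ones, has \emph{all} internal vertices b-vertices in every completion, and creates no pair at distance more than~$4$. You correctly observe that the edge gadget of Theorem~\ref{theo:C6retract} does not transfer because the b-vertex condition is a global demand on internal vertices --- but that observation is precisely the difficulty, not a step toward resolving it. As written, the proposal is a proof schema with an existence claim for the gadget left unproved, so it does not establish the theorem.

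The idea you are missing is that for \kFall{3} no gadget is needed at all: the b-vertex condition itself does the work. The paper represents each hyperedge $e_j$ by a \emph{single} vertex adjacent exactly to its three elements $v_{i_1},v_{i_2},v_{i_3}$, adds an apex $v$ complete to $V$, a disjoint copy $V'$ of $V$ joined by a perfect matching $\{v_iv'_i\}$, and a second apex $v'$ complete to $V'$. In any 3-fall-coloring with $f(v)=1$, properness forces $f(V)\subseteq\{2,3\}$, so each hyperedge vertex $e_j$ must itself take color~$1$, and the requirement that $e_j$ be a b-vertex says \emph{verbatim} that both colors $2$ and $3$ appear among its three neighbors, i.e., that the hyperedge is not monochromatic. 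Conversely, a 2-coloring of the hypergraph on colors $\{2,3\}$ extends by coloring $E\cup\{v,v'\}$ with~$1$ and each $v'_i$ with the color complementary to $f(v_i)$; the matching copies are exactly what make every $v_i$ and $v'_i$ a b-vertex, and the two apexes yield the diameter-$4$ bound. So where you planned per-gadget ``private color-3 anchors'' plus a delicate feasibility analysis, the paper gets the equivalence directly from the degree-3 hyperedge vertices, with no case analysis of gadget completions at all.
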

\begin{proof}
We present a reduction from \textsc{3-Uniform 2-Col}. Consider a 3-uniform hypergraph $G$ on vertices $V = \{v_1,\dots,v_n\}$ and hyperedges $E=\{e_1,\ldots,e_m\}$, and let $G'$ be constructed as follows (see Figure~\ref{fig:FallCol} for an illustration). Add $V$ and $E$ to the set of vertices of $G'$, together with a copy $v'_i$ of each vertex $v_i\in V$; denote by $V'$ the set $\{v'_i\mid v_i\in V\}$. Also, add two new vertices $v,v'$, and make $v$ complete to $V$ and $v'$ complete to $V'$. Finally, add an edge between $e_j$ and each $v_i\in e_j$ for every $e_j\in E$, and add the matching $\{v_iv'_i\mid i\in[n]\}$. We prove that $G'$ is a bipartite graph with diameter four, and that $G$ is a \yes-instance of \textsc{3-Uniform 2-Col} if and only if $G'$ is a \yes-instance of \kFall{3}.

First, note that $(V'\cup E\cup \{v\}, V\cup\{v'\})$ is a bipartition of $G'$. To see that $G'$ has diameter four, first note that $G'-E$ consists of a perfect matching between $V$ and $V'$, together with a vertex $v$ complete to $V$ and a vertex $v'$ complete to $V'$. Observe that this subgraph has diameter three, with the most distant pairs of vertices being $v$ and $v'$, and $v_i$ and $v'_j$ with $i\neq j$. Now, consider a hyperedge $e\in E$. Below, we show that the distance between $e$ and any other vertex of $G'$ is at most four.

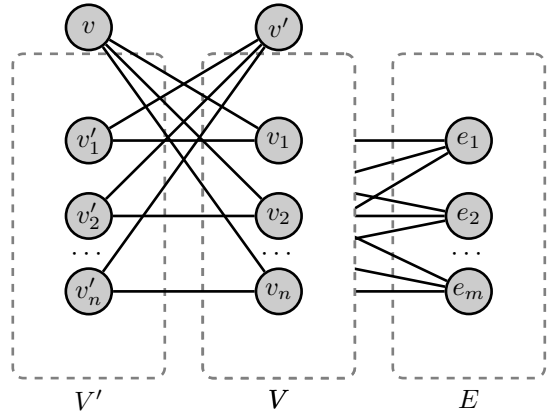
\begin{figure}[ht]
\begin{center}
  \begin{tikzpicture}[scale=1]
  \pgfsetlinewidth{1pt}

  \tikzset{vertex/.style={circle, minimum size=0.6cm, fill=black!20, draw, inner sep=1pt}}

    \node[vertex] (v1) at (0,4) {$v_1$};
    \node[vertex] (v2) at (0,3) {$v_2$};
    \node  at (0,2.5) {$\cdots$};
    \node[vertex] (vn) at (0,2) {$v_n$};

    \node[vertex] (vp1) at (-2.5,4) {$v'_1$};
    \node[vertex] (vp2) at (-2.5,3) {$v'_2$};
    \node  at (-2.5,2.5) {$\cdots$};
    \node[vertex] (vpn) at (-2.5,2) {$v'_n$};

    \node[vertex] (e1) at (2.5,4) {$e_1$};
    \node[vertex] (e2) at (2.5,3) {$e_2$};
    \node  at (2.5,2.5) {$\cdots$};
    \node[vertex] (em) at (2.5,2) {$e_m$};

    \node[vertex] (vp) at (0,5.5) {$v'$};
    \node[vertex] (v) at (-2.5,5.5) {$v$};

   \node [label=270:$V$, draw=black!50, rounded corners, dashed, minimum width=2cm, minimum height=4.3cm] at (0,3) {};
   \node [label=270:$V'$, draw=black!50, rounded corners, dashed, minimum width=2cm, minimum height=4.3cm] at (-2.5,3) {};
   \node [label=270:$E$, draw=black!50, rounded corners, dashed, minimum width=2cm, minimum height=4.3cm] at (2.5,3) {};

    \draw (e1)--(v1) (e1)--(0,3.3) (e1)--(0,2.5)
            (e2)--(0,3.5) (e2)--(0,3) (e2)--(0,2.5)
            (em)--(0,3.2) (em)--(0,2.5) (em)--(0,2);

\node [label=270:$V$, draw=black!50, rounded corners, dashed, minimum width=2cm, minimum height=4.3cm,fill=white] at (0,3) {};
\node[vertex] (v1) at (0,4) {$v_1$};
    \node[vertex] (v2) at (0,3) {$v_2$};
    \node  at (0,2.5) {$\cdots$};
    \node[vertex] (vn) at (0,2) {$v_n$};

    \draw (v)--(v1) (v)--(v2) (v)--(vn);
    \draw (vp)--(vp1) (vp)--(vp2) (vp)--(vpn);
    \draw (v1)--(vp1) (v2)--(vp2) (vn)--(vpn);
  \end{tikzpicture}
\end{center}
\caption{Graph $G'$ related to the hypergraph $G = (V,E)$.}
\label{fig:FallCol}
\end{figure}

\begin{itemize}
\item[$\bullet$] $d(e,v) = 2$: let $v_i\in e$; then $(e,v_i, v)$ is a path in $G'$;
\item[$\bullet$] $d(e,v') = 3$: let $v_i\in e$; then $(e,v_i,v'_i,v')$ is a path in $G'$;
\item[$\bullet$] $d(e,v_i)\le 3$ for every $v_i\in V$: if $v_i\in e$, then $(e,v_i)$ is a path in $G'$. Otherwise, let $v_j\in e$; then $(e,v_j,v,v_i)$ is a path in $G'$;
\item[$\bullet$] $d(e,v'_i)\le 4$ for every $v'_i\in V'$: if $v_i\in e$, then $(e,v_i,v'_i)$ is a path in $G'$. Otherwise, let $v_j\in e$; then $(e,v_j,v,v_i,v'_i)$ is a path in $G'$;
\item[$\bullet$] $d(e,e') \le 4$, for every $e'\in E(G)\setminus e$: if there exists $v_i\in e\cap e'$, then $(e,v_i,e')$ is a path in $G'$. Otherwise, let $v_i\in e$ and $v_j\in e'$, then $(e,v_i,v,v_j,e')$ is a path in $G'$.
\end{itemize}

Now, we prove that $G$ is a \yes-instance of \textsc{3-Uniform 2-Col} if and only if $G'$ is a \yes-instance of \kFall{3}. First, consider a 2-coloring $f$ of $G$ with no monochromatic hyperedge, and suppose that the used colors are $\{2,3\}$. We extend $f$ to a 3-fall-coloring $f'$ of $G'$. For this, color every $x\in E\cup\{v,v'\}$ with~1, and color $v'_i$ with $c\in \{2,3\}\setminus f(v_i)$. One can verify that, because no hyperedge of $G$ is monochromatic in $f$, the obtained coloring is a fall-coloring of $G'$.

Finally, consider a 3-fall-coloring $f'$ of $G'$, and suppose, without loss of generality, that $f'(v)=1$. This and the fact that $v$ is a b-vertex imply that $f(V) = \{2,3\}$. Hence, for every $e\in E(G)$, since $N_{G'}(e) \subseteq V$ and $1\notin f(V)$, in order for $e$ to be a b-vertex we must have that $f(e)=1$, and that $f(N_{G'}(e))=\{2,3\}$. Therefore, the coloring $f'$ restricted to $V$ is a 2-coloring of $G$ with no monochromatic hyperedge.
\end{proof}

\bibliographystyle{plain}

\begin{appendix}

\section{\texorpdfstring{$3$}{3}-b-colorings and \texorpdfstring{$3$}{3}-fall-colorings}
\label{ap:Faik}

In this section, for the sake of completeness, we present a proof of Faik~\cite{F.05}.

\begin{theorem}[Faik~\cite{F.05}]
Let $G$ be a bipartite graph with diameter at most~3. If $f$ is a $3$-b-coloring of $G$, then $f$ is a $3$-fall-coloring of $G$.
\end{theorem}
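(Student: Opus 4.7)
My plan is to mimic the structure of the proof of Lemma~\ref{lem:tool}: first establish that each color appears in \emph{both} parts of the bipartition, then exploit the bipartite/diameter-$3$ parity constraint to force the desired color to appear in every neighborhood. Let $(X,Y)$ be the bipartition of $G$ and let $f$ be a $3$-b-coloring, with b-vertices $b_1,b_2,b_3$ of colors $1,2,3$ respectively.

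\textbf{Step 1: $f(X)=f(Y)=\{1,2,3\}$.} Suppose for contradiction that some color, say $c$, is missing from $Y$, i.e. $f^{-1}(c)\subseteq X$. Let $c',c''$ be the other two colors. If $b_{c'}\in X$, then $N(b_{c'})\subseteq Y$ would have to contain a vertex of color $c$, which is impossible; hence $b_{c'}\in Y$, and by the same argument $b_{c''}\in Y$. But then $N(b_{c'})\subseteq X$ must contain colors $c$ and $c''$, and $N(b_{c''})\subseteq X$ must contain colors $c$ and $c'$, so $X$ realizes all three colors. Pick $x'\in X$ of color $c'$ and $x''\in X$ of color $c''$; they are distinct and both in $X$, so $\mathrm{dist}(x',x'')$ is a positive even integer at most~$3$, hence equal to~$2$. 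Their common neighbor $z\in Y$ must satisfy $f(z)\notin\{c',c''\}$, so $f(z)=c$, contradicting $f^{-1}(c)\cap Y=\emptyset$. A symmetric argument rules out a color missing from $X$.

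\textbf{Step 2: every vertex is a b-vertex.} Let $v\in V(G)$; by symmetry assume $v\in X$, and set $c=f(v)$. Suppose $v$ misses some color $c'\neq c$ in its neighborhood; let $c''$ denote the remaining color. By Step~1, there exists $u\in X$ with $f(u)=c''$. Since $c\neq c''$ we have $u\neq v$, and $u,v\in X$ forces $\mathrm{dist}(u,v)$ to be positive and even, hence exactly~$2$ by the diameter hypothesis. A common neighbor $z\in Y$ of $u$ and $v$ satisfies $f(z)\neq c$ and $f(z)\neq c''$, so $f(z)=c'$; but then $z\in N(v)$ is a neighbor of $v$ of color $c'$, contradicting our assumption. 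Thus $N[v]$ contains all three colors, i.e.\ $v$ is a b-vertex.

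The only delicate point is Step~1, where one must bootstrap from the single b-vertex guarantee per color class to the stronger statement that each color actually appears in \emph{both} parts of the bipartition; once that is available, Step~2 is a direct distance-$2$ argument and closely parallels the reasoning in Lemma~\ref{lem:tool}. In particular, the hypothesis $\mathrm{diam}(G)\le 3$ is used in both steps solely to guarantee that any two vertices lying in the same part share a common neighbor, which is exactly what the bipartite parity argument needs.
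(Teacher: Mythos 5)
Your proof is correct, and your Step~2 coincides with the paper's own argument: two distinct-colored vertices in the same part are at positive even distance at most~$3$, hence at distance exactly~$2$, and their common neighbor is forced to carry the third color. The genuine difference is in Step~1. The paper obtains $f(X)=f(Y)=[3]$ by simply invoking Lemma~\ref{lem:tool}, but that lemma is stated and proved for $k$-\emph{fall}-colorings: its proof uses that \emph{every} vertex is a b-vertex, which is exactly what the theorem is trying to establish, whereas a $3$-b-coloring only guarantees one b-vertex per color class. Your Step~1 proves the b-coloring analogue from scratch --- if $f^{-1}(c)\subseteq X$, the b-vertices $b_{c'}$ and $b_{c''}$ are forced into $Y$, their neighborhoods place all three colors in $X$, and one distance-$2$ argument then manufactures a vertex of color $c$ in $Y$, a contradiction. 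This bootstrap is precisely the step the paper elides, so your write-up is self-contained where the paper's citation of Lemma~\ref{lem:tool} is, strictly read, not applicable; the price is a somewhat longer argument, but it repairs a small imprecision in the published proof rather than merely reproducing it.
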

\begin{proof}
Let $(X,Y)$ be the bipartition of $V(G)$. By Lemma~\ref{lem:tool}, it holds that that $f(X) = f(Y) = [3]$. Note that if $u,v$ are within the same part, then $N(u)\cap N(v)\neq \emptyset$,
as otherwise their distance would be at least four. So, let $u\in X$ be of color~1. Because there exists $v\in X$ of color $2$ and since $N(u)\cap N(v)\neq \emptyset$, we get that $u$ must have a neighbor of color 3, namely the common neighbor with $v$. The analogous holds when picking any $v\in X$ of color~$3$; therefore $u$ is a b-vertex. Clearly this argument can be applied to every $u\in X\cup Y$ just by renaming the colors and the parts.
\end{proof}

\end{appendix}

\end{document}